\newtheorem{theorem}{Theorem}
\newtheorem{false statement}{False statement}
\theoremstyle{definition}
\newtheorem{claim}{Claim}
\newtheorem{conjecture}{Conjecture}
\renewcommand{\mod}{{\rm mod}}
\newcounter{mathitem}
  {\begin{list}{{$(\roman{mathitem})$}}{
   \setcounter{mathitem}{0}
   \usecounter{mathitem}
   \setlength{\topsep}{0pt plus 2pt minus 0pt}
   \setlength{\parskip}{0pt plus 2pt minus 0pt}
   \setlength{\partopsep}{0pt plus 2pt minus 0pt}
   \setlength{\parsep}{0pt plus 2pt minus 0pt}
   \setlength{\leftmargin}{35pt}
   \setlength{\itemsep}{0pt plus 2pt minus 0pt}}}
  {\end{list}}
\date{\dateline{July 27, 2013}{}\\
\small Mathematics Subject Classifications: 05C38, 05C15, 05C20}
\begin{document}

\title{\bf\Large Rainbow $C_4$'s and Directed $C_4$'s: the Bipartite Case Study}

\date{}

\author{Bo Ning$^{a}$\thanks{E-mail address: ningbo\_math84@mail.nwpu.edu.cn (BO NING).} and Jun Ge$^{b}$\thanks{Corresponding author. E-mail address: mathsgejun@163.com (JUN GE).}\\
\small $^{a}$Department of Applied Mathematics, School of Science,\\
\small  Northwestern Polytechnical University, Xi'an, Shaanxi 710072, P.R.~China\\[2mm]
\small $^{b}$School of Mathematical Sciences, \\
\small Xiamen University, Xiamen, Fujian 361005, P.R. China}
\maketitle
\begin{abstract}
In this paper we obtain a new sufficient condition for the existence
of directed cycles of length 4 in oriented bipartite graphs. As a
corollary, a conjecture of H. Li is confirmed. As an application,
a sufficient condition for the existence of rainbow cycles of length 4
in bipartite edge-colored graphs is obtained.

\noindent {\bf Keywords:} Rainbow cycle; Edge-colored graph; Directed cycle;
Oriented bipartite graph

\noindent {\bf Mathematics Subject Classification (2010):} 05C38, 05C15, 05C20
\end{abstract}

\section{Introduction}
For terminology and notation not defined here, we refer to \cite{BM}. Let $G=(V,E)$ be a simple graph.
An \emph{edge-coloring} of $G$ is a mapping $C: E\rightarrow \mathbb N^{+}$,
where $\mathbb N^{+}$ is the set of positive integers. We
call $G^c$ an \emph{edge-colored graph} (or briefly, a \emph{colored graph})
if $G$ is assigned an edge-coloring $C$. Let $v$ be a vertex of
$G^c$. The \emph{color degree} of $v$ in $G^c$, denoted by $d^{c}_{G}(v)$
(or briefly, $d^{c}(v)$), is the number of colors of the edges incident
to $v$. A \emph{color neighborhood set} $N^c (v)$ of $v$ is a subset of $N(v)$, the
neighborhood of $v$, such that the colors of edges between $v$ and $N^c (v)$ are pairwise distinct.
Let $H$ be a subgraph of $G$, then $C(H)=\{C(e):e\in E(H)\}$ is called the \emph{color set} of $H$.

A subgraph of a colored graph is called \emph{rainbow}
(sometimes called \emph{heterochromatic}, or \emph{colorful})
if all edges of it have distinct colors. The existence of
rainbow subgraphs has been studied for a long time. A
problem of rainbow Hamilton cycles in colored complete
graphs was mentioned by Erd\"{o}s, Ne\v{s}et\v{r}il
and R\"{o}dl \cite{ENR}, and later studied by Hahn and Thomassen \cite{HT},
Frieze and Reed \cite{FR} and Albert, Frieze and Reed \cite{AFR}, respectively.
Rainbow matchings were studied by Wang and H. Li \cite{WL},
Lesaulnier et al. \cite{LSWW}, and Kostochka and Yancey \cite{KY}.
Chen and X. Li \cite{CL_0,CL_5} studied the existence of long
rainbow paths. A recent article on strong rainbow connection
can be found in \cite{LS}. For a survey on the study of
rainbow subgraphs in colored graphs, we refer to \cite{KL}.

In particular, rainbow short cycles have received much attention.
Broersma et al. \cite{BLWZ} studied the existence of rainbow $C_3$'s
and $C_4$'s under color neighborhood union condition. Later,
H. Li and Wang \cite{LW} obtained two results on the existence of
rainbow $C_3$'s and $C_4$'s under colored degree condition.

\begin{theorem}[Li and Wang \cite{LW}]\label{t1}
Let $G^c$ be a colored graph of order $n\geq 3$. If $d^{c}(v)\geq
(4\sqrt{7}/7-1)n+3-4\sqrt{7}/7$ for each $v\in V(G)$,
then $G^c$ has either a rainbow $C_3$ or a rainbow $C_4$.
\end{theorem}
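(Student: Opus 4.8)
The plan is to argue by contradiction: suppose $G^c$ has neither a rainbow $C_3$ nor a rainbow $C_4$, and show that then some vertex has color degree below $(4\sqrt7/7-1)n+3-4\sqrt7/7$. Fix a vertex $v$ of minimum color degree, put $d:=d^c(v)$ (so $d^c(x)\ge d$ for every $x$), choose a color neighborhood $S=N^c(v)=\{x_1,\dots,x_d\}$ with spoke colors $c_i:=C(vx_i)$ pairwise distinct, and set $T:=V(G)\setminus(S\cup\{v\})$, so $|T|=n-1-d=:m$. Since the claimed threshold exceeds $n/2$, I may assume $s:=2d-n\ge 1$.

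The first step is to use the absence of a rainbow $C_3$ locally at $v$. For every edge $x_ix_j$ of $G[S]$ the triangle $vx_ix_j$ is not rainbow, so $C(x_ix_j)\in\{c_i,c_j\}$; orienting each such edge toward the endpoint whose spoke color it carries turns $G[S]$ into an orientation $D$. One checks directly that a rainbow $C_3$ (respectively rainbow $C_4$) contained in $S$ is exactly a directed triangle (respectively directed $4$-cycle) of $D$, so $D$ contains no directed cycle of length $3$ or $4$. Moreover the colors appearing at $x_i$ on edges into $S\cup\{v\}$ are exactly $\{c_i\}$ together with the $c_j$ over out-neighbours $x_j$ of $x_i$ in $D$, i.e.\ only $1+a_i$ of them, where $a_i$ is the out-degree of $x_i$ in $D$. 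Hence $x_i$ has at least $d-1-a_i$ colors occurring only on edges to $T$, so $d_T(x_i)\ge d-1-a_i$; combined with $d_T(x_i)\le m$ this forces $a_i\ge 2d-n=s$. Thus $D$ has minimum out-degree $\ge s$ while avoiding directed $3$- and $4$-cycles.

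The second step uses the absence of a rainbow $C_4$ through $v$. For $w\in T$ and $x_i,x_j\in A_w:=N(w)\cap S$, the $4$-cycle $vx_iwx_jv$ repeats a color among $c_i,C(x_iw),C(x_jw),c_j$, and since $c_i\ne c_j$ this is very restrictive: sorting the edges $x_iw$ by whether $C(x_iw)$ equals $c_i$, equals another spoke color, or is no spoke color, one sees that the non-spoke edges all carry one common color and the other-spoke edges contribute only $O(1)$ colors, so $w$ is incident to at least $d^c_S(w)-O(1)$ ``matched'' edges $x_iw$ with $C(x_iw)=c_i$, where $d^c_S(w)$ is the number of colors on $w$-$S$ edges; and $d\le d^c(w)\le d^c_S(w)+m$ shows $d^c_S(w)\ge s$. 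A matched edge $x_iw$ reuses the color $c_i$ already present at $x_i$, so does not help $x_i$ reach color degree $d$: from $d^c(x_i)\le 1+a_i+(d_T(x_i)-g_i)$, where $g_i$ counts the matched $T$-edges at $x_i$, together with $\sum_i g_i\ge ms-O(m)$ and $\sum_i d_T(x_i)\le dm$, one obtains a lower bound of the form $e(G[S])\ge d^2-m^2-O(n)$.

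The third step bounds $e(G[S])$ from above purely on the digraph side: since $D$ is an orientation of $G[S]$ with minimum out-degree $\ge s$ and no directed $3$- or $4$-cycle, studying for a vertex $u$ of smallest out-degree the set $N^+(u)$ — whose vertices, by triangle-freeness, send arcs only into $N^+(u)$ or to non-neighbours of $u$, so that the same constraints recur on $N^+(u)$ — and inducting on $d$, one shows $D$ cannot be too dense. Combining the two estimates leaves a quadratic inequality in $d$ and $n$ whose relevant root is the positive root of $7x^2+14x-9$, namely $4\sqrt7/7-1$; tracking the additive constants yields $d<(4\sqrt7/7-1)n+3-4\sqrt7/7$, contradicting $d=d^c(v)\ge(4\sqrt7/7-1)n+3-4\sqrt7/7$. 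I expect this last estimate to be the main obstacle: one must squeeze the right upper bound on $e(G[S])$ out of ``minimum out-degree $\ge s$, no directed $C_3$ or $C_4$'' — a Caccetta--H\"{a}ggkvist-flavoured statement that has to be handled without the full conjecture — and make it mesh with the two-sided color count so that the trade-off between ``vertices of $S$ needing new colors in $T$'' and ``vertices of $T$ constrained by the spoke colors'' closes up precisely at $4\sqrt7/7-1$; the $C_4$-pattern analysis for the vertices of $T$ is the other delicate point, though more mechanical.
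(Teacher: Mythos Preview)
This theorem is not proved in the present paper: it is quoted from Li and Wang \cite{LW} as background, and the paper's own proofs concern only Theorems~\ref{t9} and~\ref{t10}. So there is no ``paper's own proof'' of Theorem~\ref{t1} here to compare your proposal against; any comparison would have to be made with the original argument in \cite{LW}.

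As to the proposal itself, it is a plan rather than a proof, and you flag the real gap yourself. Steps one and two --- orienting $G[S]$ via the rainbow-$C_3$ constraint and extracting a minimum out-degree bound $a_i\ge 2d-n$, then analysing $4$-cycles through $v$ to force many ``matched'' $T$-edges --- are standard and essentially what one does in this type of problem. But step three is where the argument lives: you need a concrete upper bound on the number of arcs in an orientation on $d$ vertices with minimum out-degree at least $s$ and no directed $C_3$ or $C_4$, sharp enough that when played off against your lower bound $e(G[S])\ge d^2-m^2-O(n)$ it forces the constant $4\sqrt7/7-1$. You do not supply that bound, and ``studying $N^+(u)$ \dots and inducting on $d$'' is not yet an argument --- the Caccetta--H\"aggkvist circle of problems is notoriously resistant to exactly this kind of local induction. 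Until that extremal estimate is written down with explicit constants and the $O(\cdot)$ terms in step two are made precise, the contradiction does not close, so as it stands this is an outline with the decisive inequality missing.
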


\begin{theorem}[Li and Wang \cite{LW}]\label{t2}
Let $G^c$ be a colored graph of order $n\geq 3$. If $d^{c}(v)\geq (\sqrt{7}+1)n/6$
for each $v\in V(G)$, then $G^c$ has a rainbow $C_3$.
\end{theorem}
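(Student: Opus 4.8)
The plan is to argue by contradiction, following the usual ``rainbow neighbourhood'' strategy for color-degree results. Write $c=(\sqrt{7}+1)/6$ and note that $c$ is the positive root of $6x^{2}-2x-1=0$; this is the value at which the estimates below become tight. Suppose $G^{c}$ has no rainbow $C_{3}$. Choose a vertex $v$ of maximum color degree, set $d:=d^{c}(v)$, so $d\ge cn$, and fix a rainbow neighbourhood $S=\{u_{1},\dots,u_{d}\}\subseteq N(v)$ so that the colors $\alpha_{i}:=C(vu_{i})$ are pairwise distinct. Put $T:=V(G)\setminus(S\cup\{v\})$, so $|T|=t:=n-1-d$. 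Since there is no rainbow $C_{3}$, every edge $u_{i}u_{j}$ of $G[S]$ satisfies $C(u_{i}u_{j})\in\{\alpha_{i},\alpha_{j}\}$, and we may orient $G[S]$ by letting $u_{i}\to u_{j}$ whenever $C(u_{i}u_{j})=\alpha_{j}$; a directed triangle $u_{i}\to u_{j}\to u_{k}\to u_{i}$ would be a rainbow triangle (its colors being $\alpha_{j},\alpha_{k},\alpha_{i}$), so this orientation of $G[S]$ has no directed triangle.

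The next step is to read off local degree bounds. For $u_{i}\in S$ the colors on the edges from $u_{i}$ into $\{v\}\cup S$ are exactly $\{\alpha_{i}\}\cup\{\alpha_{j}:u_{i}\to u_{j}\}$, so there are $1+d^{+}_{S}(u_{i})$ of them, while the remaining edges at $u_{i}$ run to $T$ and contribute at most $|N(u_{i})\cap T|\le t$ further colors. Hence $cn\le d^{c}(u_{i})\le 1+d^{+}_{S}(u_{i})+t$, that is,
\[
 d^{+}_{S}(u_{i})\ \ge\ d-(1-c)n \qquad\text{for every } i .
\]
Since the orientation has no directed triangle, $u_{i}\to u_{j}$ forces $N^{-}_{S}(u_{i})\cap N^{+}_{S}(u_{j})=\emptyset$, so $d^{-}_{S}(u_{i})<(1-c)n$ for every $i$ (pick any out-neighbour $u_{j}$ and apply the displayed bound to it); comparing the lower bound $\sum_{i}d^{+}_{S}(u_{i})\ge d\bigl(d-(1-c)n\bigr)$ with the upper bound $\sum_{i}d^{-}_{S}(u_{i})<d(1-c)n$ --- both equal to $|E(G[S])|$ --- gives $d<2(1-c)n$, which together with $d\ge cn$ already yields $c<2/3$ but no sharper bound. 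The same mechanism runs from $T$: if $w\in T\cap N(v)$ and $C(vw)=\alpha_{k}$, then for each $u_{j}\in N(w)\cap S$ with $j\ne k$ the triangle $vwu_{j}$ forces $C(wu_{j})\in\{\alpha_{k},\alpha_{j}\}$, and the analogue of the displayed inequality shows that $w$ has at least $d-(1-c)n$ neighbours $u_{j}\in S$ with $C(wu_{j})=\alpha_{j}$.

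The crude estimate ``at most $t$ colors toward $T$'' is exactly where the first count loses, so it must be sharpened. I would classify the colors $u_{i}$ sees on edges to $T$ as \emph{old} (lying in $\{\alpha_{1},\dots,\alpha_{d}\}$) or \emph{new}: if $u_{i}w$ carries a new color and $w\in N(v)$, then the triangle $vwu_{i}$ forces $C(vw)=\alpha_{i}$, and if it carries $\alpha_{j}$ with $j\ne i$ it forces $C(vw)\in\{\alpha_{i},\alpha_{j}\}$. Writing $T_{0}:=T\setminus N(v)$ and $T_{j}:=\{w\in T\cap N(v):C(vw)=\alpha_{j}\}$ (so the sets $T_{j}$ partition $T\cap N(v)$, with $\sum_{j}|T_{j}|=t-|T_{0}|$), this pins down which vertices $w$ can receive such edges; together with the symmetric information for $T\cap N(v)$ obtained above, it yields an upper bound for $\sum_{x\in V(G)}d^{c}(x)$ in terms of $d$, $t=n-1-d$, $|T_{0}|$, $|E(G[S])|$ and the number of $S$--$T$ edges that is strictly stronger than the naive one. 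Vertices of $T_{0}$, which lie on no triangle through $v$, must be treated separately --- either by bounding $|T_{0}|$ through $\deg(v)\ge d$, or via triangles $wu_{i}u_{j}$ with $u_{i}u_{j}\in E(G[S])$.

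Finally one compares $\sum_{x\in V(G)}d^{c}(x)\ge cn^{2}$ with the structural upper bound just assembled and optimises over the free ratios ($d/n$, $|T_{0}|/n$ and the relevant edge densities); the chain of inequalities can hold only if $6c^{2}-2c-1\le0$, i.e.\ $c\le(\sqrt{7}+1)/6$, and since several of the inequalities along the way are strict (such as $|E(G[S])|<d^{2}/2$), the boundary case $c=(\sqrt{7}+1)/6$ is in fact excluded, which is the contradiction we want. The conceptual ingredients --- the rainbow star at $v$, the directed-triangle-free orientation of $G[S]$, and the triangles through $v$ that pin $C(vw)$ to a two-element set --- are routine. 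The real work, and the reason the threshold is $(\sqrt{7}+1)/6$ rather than the $2/3$ coming out of the first count, is the bookkeeping in the last two steps: deciding precisely which local inequalities to combine, and with what weights, so that the extremal configuration is pinned down and the bound is tight. The most delicate point is the set $T_{0}=T\setminus N(v)$, where the convenient triangles through $v$ are unavailable.
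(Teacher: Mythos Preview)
The paper does not prove this statement: Theorem~\ref{t2} is quoted from Li and Wang~\cite{LW} as background, and only Theorems~\ref{t9} and~\ref{t10} are proved here. So there is no ``paper's own proof'' to compare against.

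Taken on its own, your proposal is a strategy sketch, not a proof, and the gap is exactly at the point that determines the constant $(\sqrt{7}+1)/6$. Your first pass (orient $G[S]$, forbid directed triangles, bound $d^{+}_{S}$ from below and $d^{-}_{S}$ from above) is carried out cleanly and yields $c<2/3$, as you note. From there on, however, you only describe what \emph{should} be done: classify colors on $S$--$T$ edges as old or new, partition $T$ into $T_{0}$ and the $T_{j}$'s, assemble ``an upper bound for $\sum_{x}d^{c}(x)$ \dots\ that is strictly stronger than the naive one,'' and then ``optimise over the free ratios'' to obtain $6c^{2}-2c-1\le 0$. None of these steps is actually executed. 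You do not write down the refined upper bound, you do not specify which inequalities are combined or with what weights, and you do not show that the optimum of the resulting expression lands on the quadratic $6c^{2}-2c-1$. You also flag $T_{0}=T\setminus N(v)$ as ``the most delicate point'' and then leave it unresolved.

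In short: the scaffolding (rainbow star at $v$, triangle-free orientation of $G[S]$, triangles through $v$ constraining $C(vw)$) is standard and correct, but the computation that distinguishes $(\sqrt{7}+1)/6$ from the cruder $2/3$ is entirely missing. As written this is an outline of a plausible approach, not a proof.
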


H. Li and Wang \cite{LW} conjectured that every colored graph $G^c$
of order $n\geq 3$ has a rainbow $C_3$ if $d^{c}(v)\geq (n+1)/2$
for each $v\in V(G)$. This conjecture was proved by H. Li \cite{L} and
stronger results were proved by B. Li et al. \cite{LNXZ} with different methods as follows.

\begin{theorem}[Li \cite{L}]\label{t3}
Let $G^c$ be a colored graph of order $n\geq 3$. If $d^{c}(v)\geq (n+1)/2$
for each $v\in V(G)$, then $G^c$ has a rainbow $C_3$.
\end{theorem}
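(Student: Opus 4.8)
The plan is to argue by contradiction: assume $G^c$ has order $n$, satisfies $d^c(v)\ge (n+1)/2$ for every $v$, yet contains no rainbow $C_3$, and then exhibit a vertex whose color degree is smaller than $(n+1)/2$.

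First I would set up a local orientation. Fix a vertex $v$ and a color neighbourhood $S=N^c(v)$ with $|S|=d^c(v)=:d$; write $S=\{v_1,\dots,v_d\}$ and, after relabelling colors, $C(vv_i)=i$. If $v_i,v_j\in S$ are adjacent, the triangle $vv_iv_j$ is not rainbow and $C(vv_i)\ne C(vv_j)$, so $C(v_iv_j)\in\{i,j\}$; I would orient $v_i\to v_j$ exactly when $C(v_iv_j)=j$, obtaining an orientation $D$ of the graph $G[S]$. The key observation is that $D$ has no directed triangle: a directed triangle $v_i\to v_j\to v_k\to v_i$ would carry the three colors $j,k,i$ on its edges, and these are pairwise distinct, so it would be a rainbow $C_3$.

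Next I would show that out-degrees in $D$ are forced to be large. Fix $v_i\in S$. The edge $vv_i$ and every edge $v_jv_i$ with $v_j\to v_i$ all have color $i$; the edges $v_iv_j$ with $v_i\to v_j$ carry the pairwise distinct colors $j$; and every remaining edge at $v_i$ enters $V\setminus(\{v\}\cup S)$, which has size $n-1-d$. Hence $d^c(v_i)\le 1+d^+_D(v_i)+(n-1-d)$, where $d^+_D$ denotes out-degree in $D$, so $d^+_D(v_i)\ge d^c(v_i)-(n-d)\ge d-\tfrac{n-1}{2}\ge 1$. Thus $D$ is a digraph on $d$ vertices with no directed triangle and minimum out-degree at least $d-\tfrac{n-1}{2}$.

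The hard part is the finish. A bare arc count, $d\bigl(d-\tfrac{n-1}{2}\bigr)\le\binom d2$, gives only $d\le n-2$ — enough to settle $n\le 4$ but nowhere near the target $d\ge\tfrac{n+1}{2}$ — and even the best known Caccetta--H\"aggkvist-type estimates for triangle-free digraphs are too weak to finish from the previous step alone. So I would bring in more of the structure of the coloring. Three ingredients look promising: (a) the orientation above exists at \emph{every} vertex, and all these orientations are induced by the single edge-coloring, so they constrain one another; (b) an arc $v_i\to v_j$ forces color $j$ onto both $vv_j$ and $v_iv_j$, so colors get recycled in a controlled way — tracking this, one checks for instance that every edge between $N^-_D(v_1)$ and $N^+_D(v_1)$ points from the former to the latter, and that each vertex of $N^+_D(v_1)$ is adjacent to at most $\tfrac{n-5}{2}$ vertices of $N^-_D(v_1)$, which yields extra linear inequalities relating $|N^-_D(v_1)|$, $|N^+_D(v_1)|$ and $n$; and (c) the vertices outside $\{v\}\cup S$ also satisfy the hypothesis and attach to $S$. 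I expect the genuine difficulty to lie in fusing (a)--(c) into a single count that forces some vertex to have color degree at most $\tfrac{n-1}{2}$: the orientation argument supplies the skeleton, but on its own it cannot close the remaining gap.
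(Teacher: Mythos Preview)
This paper does not prove Theorem~\ref{t3}; it merely quotes the result from Li~\cite{L} (and the stronger average-degree version from~\cite{LNXZ}) as background. So there is no ``paper's own proof'' to compare against here, and the relevant question is only whether your argument stands on its own.

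It does not, and you say so yourself. Your setup is clean and correct: restricting to a color neighbourhood $S$ of a vertex $v$, the orientation $v_i\to v_j$ iff $C(v_iv_j)=C(vv_j)$ is well-defined, a directed triangle in $D$ is indeed a rainbow $C_3$, and the bound $d^c(v_i)\le 1+d^+_D(v_i)+(n-1-d)$ is valid. But the conclusion you draw from it, $d^+_D(v_i)\ge d-\tfrac{n-1}{2}$, collapses to $d^+_D(v_i)\ge 1$ once you plug in $d=d^c(v)\ge\tfrac{n+1}{2}$ and $d^c(v_i)\ge\tfrac{n+1}{2}$. A tournament-free-of-directed-triangles bound with minimum out-degree~$1$ on $\approx n/2$ vertices is hopeless: you would need out-degree of order $|S|/3$ even under the Caccetta--H\"aggkvist conjecture, and you are nowhere near that. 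The three ``ingredients'' (a)--(c) you list are suggestive but you do not actually carry any of them out; in particular, item~(b) asserts a structure on arcs between $N^-_D(v_1)$ and $N^+_D(v_1)$ without proof, and the promised ``extra linear inequalities'' are never derived. As written, the argument stops at the point where the real work begins.

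If you want to complete this line, note that the published proofs (both Li's in~\cite{L} and the stronger average-degree version in~\cite{LNXZ}) do not try to force a directed triangle inside a single color neighbourhood via out-degree alone; they exploit the global coloring more directly. Your local-orientation idea is a natural start, but by itself it cannot bridge the gap between out-degree~$1$ and out-degree~$|S|/3$.
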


\begin{theorem}[Li, Ning, Xu and Zhang \cite{LNXZ}]\label{t4}
Let $G^c$ be a colored graph of order $n\geq 3$. If
$\sum\limits_{v\in V(G)}d^c(v)\geq n(n+1)/2$, then
$G^c$ has a rainbow $C_3$.
\end{theorem}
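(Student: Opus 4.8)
The plan is to prove the contrapositive: if a colored graph $G^c$ of order $n$ has no rainbow $C_3$, then $\sum_{v\in V(G)}d^c(v)\le n(n+1)/2-1$. I would argue by induction on $n$. For $n=3$ one checks directly that any colored graph on three vertices with no rainbow $C_3$ has color-degree sum at most $5=3\cdot 4/2-1$, the extremal case being a triangle colored with the pattern $1,1,2$.

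For the inductive step, let $G^c$ have order $n\ge 4$ and no rainbow $C_3$, assume the statement for order $n-1$, and suppose for contradiction that $\sum_v d^c(v)\ge n(n+1)/2$. By Theorem~\ref{t3}, not every vertex can have $d^c(v)\ge(n+1)/2$ (otherwise $G^c$ would contain a rainbow $C_3$), so I fix a vertex $v$ of minimum color degree, say $d^c(v)=\delta^c\le\lfloor n/2\rfloor$, and delete it. Let $D(v)$ be the number of neighbors $u$ of $v$ for which $uv$ is the unique edge of color $C(uv)$ at $u$; these are precisely the neighbors whose color degree decreases when $v$ is removed, so $\sum_{u\ne v}d^c_{G-v}(u)=\sum_{u\ne v}d^c_G(u)-D(v)$. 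Applying the induction hypothesis to $G-v$ and rearranging yields $D(v)\ge n+1-\delta^c$; since also $D(v)\le d(v)\le n-1$, this already forces $\delta^c\ge 2$. The crux is to show that $D(v)\ge n+1-\delta^c$ is incompatible with $G^c$ being rainbow-$C_3$-free.

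For this I would study the neighborhood of $v$. If $u,u'$ are neighbors of $v$ with $uv,u'v$ uniquely colored at $u,u'$ and $C(uv)\ne C(u'v)$, then $uu'\notin E(G)$, since otherwise $v,u,u'$ would form a rainbow triangle. Hence the $D(v)$ neighbors in question partition into classes $U_1,\dots,U_r$, where $U_j$ collects those $u$ with $C(uv)=\gamma_j$, with $r\le\delta^c$ and no edges between distinct classes; one checks that each $|U_j|\ge2$ (a singleton class would yield a vertex of color degree below $\delta^c$), that $r\ge2$, and that the leftover set $W=V(G)\setminus(\{v\}\cup\bigcup_j U_j)$ has size $|W|=n-1-D(v)\le\delta^c-2$. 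I would then bound color degrees class by class: a vertex of $U_j$ has all its neighbors in $\{v\}\cup U_j\cup W$, so its color degree is controlled by $|U_j|$, $|W|$, and the induction hypothesis applied to $G[U_j]$; and a vertex $w\in W$ adjacent to $v$ sees, for every class $U_j$ with $\gamma_j\ne C(vw)$, only the single color $C(vw)$ on its edges into $U_j$ (again because $v$, $w$ and such a vertex would otherwise span a rainbow triangle), so $w$'s color degree is controlled by $G[W\cup\{v\}]$ together with at most one class. Summing these estimates against $D(v)\ge n+1-\delta^c$ and $|W|\le\delta^c-2$ should contradict $\sum_v d^c(v)\ge n(n+1)/2$.

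The step I expect to be the genuine obstacle is this last accounting: to close the induction with no slack, the per-class bounds --- in particular the colors on edges between $W$ and the classes, and the few non-neighbors of $v$ lying in $W$ --- must be estimated via the rainbow-$C_3$-free structure rather than trivially, and a handful of small values of $n$ and $\delta^c$ may need to be verified separately. Sharpness, which is also why the induction has to be run this tightly, is shown by $K_n$ with a fixed vertex $x$, $C(xv_i)=i$ for $i=1,\dots,n-1$, and $C(v_iv_j)=\min\{i,j\}$ otherwise: this coloring has no rainbow $C_3$ and its color degrees are $n-1,1,2,\dots,n-1$, summing to exactly $n(n+1)/2-1$.
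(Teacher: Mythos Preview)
The paper does not contain a proof of Theorem~\ref{t4}. This theorem is quoted from \cite{LNXZ} as background (alongside Theorems~\ref{t3} and~\ref{t5}) and is not used anywhere in the proofs of the paper's own results, Theorems~\ref{t9} and~\ref{t10}. So there is no ``paper's own proof'' to compare your proposal against.

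As for the proposal itself: the opening moves are sound. The base case, the use of Theorem~\ref{t3} to locate a vertex of minimum color degree $\delta^c\le\lfloor n/2\rfloor$, the deletion argument giving $D(v)\ge n+1-\delta^c$, and the structural observation that the $D(v)$ ``uniquely-colored'' neighbors split into pairwise non-adjacent color classes $U_1,\dots,U_r$ are all correct and cleanly stated. The sharpness example at the end is also right.

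However, you yourself flag the real issue: the final accounting is not carried out, only announced. The per-class bound you suggest, $d^c(u)\le |U_j|+|W|$ for $u\in U_j$, combined with $\sum_j|U_j|=D(v)$ and $|W|\le\delta^c-2$, does not by itself close the induction; a naive summation falls short, and the refinement via the induction hypothesis on $G[U_j]$ and the control on colors between $W$ and the $U_j$'s has to be made precise. In particular, the treatment of vertices in $W$ not adjacent to $v$, and of the at most one class $U_j$ with $\gamma_j=C(vw)$ for each $w\in W$, needs explicit inequalities. Until that step is written out, this is a plausible plan rather than a proof.
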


\begin{theorem}[Li, Ning, Xu and Zhang \cite{LNXZ}]\label{t5}
Let $G^c$ be a colored graph of order $n\geq 3$. If $d^{c}(v)\geq n/2$
for each $v\in V(G)$, then $G^c$ has a rainbow $C_3$ or
$G\in \{K_{n/2,n/2}, K_4-e, K_4 \}$.
\end{theorem}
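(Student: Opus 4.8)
The plan is to argue by contradiction: assume $G^c$ has no rainbow $C_3$ while $d^c(v)\ge n/2$ for every $v\in V(G)$, and deduce that $G\in\{K_{n/2,n/2},K_4-e,K_4\}$. Three reductions come first. Since $d^c(v)$ is an integer, if $n$ is odd then $d^c(v)\ge\lceil n/2\rceil=(n+1)/2$ for all $v$, so Theorem~\ref{t3} already produces a rainbow $C_3$; hence $n$ is even. Next, if $G$ is triangle-free, pick an edge $xy$ (one exists, as every vertex $v$ has $d(v)\ge d^c(v)\ge n/2\ge1$); triangle-freeness gives $N(x)\cap N(y)=\emptyset$ and makes each of $N(x),N(y)$ independent, so $n/2+n/2\le|N(x)|+|N(y)|=|N(x)\cup N(y)|\le n$, forcing $|N(x)|=|N(y)|=n/2$ and $N(x)\cup N(y)=V$; thus $V$ splits into two independent sets of size $n/2$, and minimum degree $n/2$ then joins every vertex to the whole opposite part, i.e. $G=K_{n/2,n/2}$. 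So from now on $G$ contains a triangle and the goal is to find a rainbow $C_3$, with the single proviso that for $n=4$ the conclusion $G\in\{K_4-e,K_4\}$ is also permitted. Finally, if $\sum_v d^c(v)\ge n(n+1)/2$ then Theorem~\ref{t4} gives a rainbow $C_3$; hence $\sum_v\bigl(d^c(v)-n/2\bigr)\le n/2-1$, so (as $n$ is even) at least $n/2+1$ vertices satisfy $d^c(v)=n/2$. Call these the \emph{tight} vertices.

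The main device is an orientation attached to a tight vertex. Fix a tight $v$, choose a color neighborhood set $S=N^c(v)$ with $|S|=n/2$, and set $f(x):=C(vx)$, an injection on $S$. For each edge $xy$ of $G[S]$ the triangle $vxy$ is not rainbow, so $C(xy)\in\{f(x),f(y)\}$; as $f(x)\ne f(y)$ exactly one holds, and we orient $xy$ towards the endpoint whose $f$-value equals $C(xy)$. This defines an oriented graph $D$ on $S$. Its decisive property is that $D$ has \emph{no directed triangle}: from $x\to y\to z\to x$ we would get $C(xy)=f(y)$, $C(yz)=f(z)$, $C(zx)=f(x)$, three pairwise distinct colors, so $xyz$ would be a rainbow $C_3$. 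Writing $W:=V\setminus(S\cup\{v\})$, so $|W|=n/2-1$, we count the colors at a vertex $x\in S$: the edge $vx$ and every edge of $D$ into $x$ carry the single color $f(x)$; the $d^{+}_{D}(x)$ edges of $D$ out of $x$ carry the pairwise distinct colors $f(\cdot)$ of their heads, all $\ne f(x)$; and $x$ has at most $|W|$ neighbors in $W$. Hence
\[
 n/2\le d^c(x)\le 1+d^{+}_{D}(x)+|W|=d^{+}_{D}(x)+n/2,
\]
so $d^{+}_{D}(x)\ge d^c(x)-n/2\ge0$, with $d^{+}_{D}(x)=0$ only when $x$ is tight, is joined to all of $W$, and the $n/2-1$ colors on those edges are pairwise distinct and different from $f(x)$.

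The core of the argument is to convert the triangle of $G$ into a directed triangle in some $D_v$. Take a triangle $abc$; not being rainbow it has a \emph{dominant} vertex, say $a$, with $C(ab)=C(ac)=:\alpha$. If $C(bc)\ne\alpha$, then $a,c$ are neighbors of $b$ in distinct colors, so we may choose $N^c(b)\supseteq\{a,c\}$, and in $D_b$ the (present, since $abc$ is a triangle) edge $ac$ is already oriented $c\to a$ because $C(ca)=\alpha=C(ba)$. The plan is then to use the colour-degree hypothesis together with the equality analysis above (the sinks of $D_b$ are totally joined to $W_b$) to force a further vertex $z\in N^c(b)$ with $az,zc\in E$ whose $D_b$-orientations are $a\to z\to c$, closing the directed triangle $c\to a\to z\to c$ — a rainbow $C_3$. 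The remaining possibility, a monochromatic triangle, is reduced to the previous one via the $\ge n/2+1$ tight vertices: if some tight vertex $v$ lies in a triangle in which it is not dominant, then (for a suitable choice of $N^c(v)$) $D_v$ already has an edge to start from, and otherwise every tight vertex is dominant in every triangle through it, a configuration ruled out by a short additional argument once $n\ge6$. The small orders are checked by hand: for $n=4$ every graph with $\delta(G)\ge2$ is $C_4=K_{2,2}=K_{n/2,n/2}$, $K_4-e$ or $K_4$, so the conclusion holds automatically; for $n=6$ one runs through the few graphs with $\delta(G)\ge3$ that contain a triangle and checks that none admits an edge-colouring with $d^c\ge3$ avoiding a rainbow $C_3$.

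The main obstacle is precisely this core step. A triangle of $G$ need not sit inside any single color neighborhood, so the ``two equal edges at a dominant vertex'' information attached to triangles and the ``no directed triangle'' information attached to tight vertices must be fused; and since the edges between $S$ and $W$ are unconstrained by the $v$-orientation, the extremal vertices of each $D_v$ — those completely joined to $W$ — must be handled separately, and it is exactly their analysis that both closes the argument and pins down the sporadic exceptions $K_4-e$ and $K_4$.
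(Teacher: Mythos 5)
A preliminary remark: Theorem \ref{t5} is only quoted in this paper from \cite{LNXZ}; no proof of it appears here, so your proposal must be judged on its own. Your reductions are correct and cleanly executed: forcing $n$ even via Theorem \ref{t3}, identifying $K_{n/2,n/2}$ as the unique triangle-free possibility, extracting at least $n/2+1$ tight vertices from Theorem \ref{t4}, and the orientation $D_v$ on a color neighborhood of a tight vertex together with the facts that $D_v$ has no directed triangle and that $d^{+}_{D_v}(x)\ge d^c(x)-n/2$ with the stated equality analysis. This is indeed the standard device behind Theorem \ref{t3} and its refinements.

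The genuine gap is that the decisive step is never carried out: the paragraph beginning ``The core of the argument'' is, by your own wording, a ``plan,'' and as sketched it does not close. Having placed the arc $c\to a$ in $D_b$, you assert that the color-degree hypothesis ``forces'' a vertex $z$ with $a\to z\to c$; but your own inequality only gives $d^{+}_{D_b}(x)\ge d^c(x)-n/2\ge 0$, which is vacuous exactly in the hard case where $b$ and the relevant vertices are all tight --- then $a$ may be a sink of $D_b$ and $c$ may have no further neighbors in $N^c(b)$ at all, since a tight vertex of $S$ can spend its entire color degree on $\{b\}\cup W_b$. Nothing in the proposal rules this out, and it is precisely this extremal configuration that produces the exceptions $K_4-e$ and $K_4$, so it cannot be dismissed. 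Likewise, the monochromatic-triangle case is deferred to ``a short additional argument once $n\ge 6$'' that is not given, and the $n=6$ base case is left as an unspecified finite check over a nontrivial family of graphs and colorings. Until the existence of the intermediate vertex $z$ (or some substitute contradiction) is actually derived from the abundance of tight vertices and the structure of sinks, the proof is a framework with the hardest step missing rather than a complete argument.
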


The existence of rainbow $C_4$'s in special colored graphs has
also been studied. Wang et al. \cite{WLZL} obtained a result on
the existence of rainbow $C_4$'s in triangle-free colored graphs.
Recently, H. Li \cite{L} got a result on the existence of rainbow
$C_4$'s in balanced bipartite colored graphs.

\begin{theorem}[Wang, Li, Zhu and Liu \cite{WLZL}]\label{t6}
Let $G^c$ be a triangle-free colored graph of order $n\geq 9$. If
$d^{c}(v)\geq (3-\sqrt{5})n/2+1$ for each $v\in V(G)$, then
$G^c$ has a rainbow $C_4$.
\end{theorem}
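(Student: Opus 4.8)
The plan is to argue by contradiction. Suppose $G^c$ is triangle-free with $d^c(v)\ge (3-\sqrt5)n/2+1$ for every $v\in V(G)$, yet $G^c$ contains no rainbow $C_4$. Set $\alpha=(3-\sqrt5)/2$ and note the identities $\alpha^2-3\alpha+1=0$ and, equivalently, $\alpha=(1-\alpha)^2$; the particular value of the threshold will enter only through the latter. Fix a vertex $v$ together with a color neighborhood set $A=N^c(v)$ with $|A|=d^c(v)\ge\alpha n+1$, so the edges joining $v$ to $A$ receive pairwise distinct colors $c(u):=C(vu)$ for $u\in A$. Since $G$ is triangle-free, $A$ is independent, hence $N(u)\subseteq\{v\}\cup B$ for every $u\in A$, where $B:=V(G)\setminus(A\cup\{v\})$ satisfies $|B|\le n-1-|A|\le(1-\alpha)n-2$. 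In particular each $u\in A$ sends at least $d^c(u)-1\ge\alpha n$ edges of pairwise distinct colors into the (relatively small) set $B$, so $\sum_{u\in A}|N(u)\cap B|\ge(\alpha n+1)\alpha n$.

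The heart of the argument is a rigidity statement squeezed out of the ``no rainbow $C_4$'' hypothesis. For $w\in B$ let $f(w)=\{u\in A:uw\in E(G)\text{ and }C(uw)\neq c(u)\}$. Given distinct $u,u'\in f(w)$, the $4$-cycle $v\,u\,w\,u'\,v$ carries the colors $c(u),C(uw),C(u'w),c(u')$; as $c(u)\neq c(u')$ and $C(uw)\neq c(u)$, $C(u'w)\neq c(u')$, this cycle is non-rainbow only if $C(uw)=C(u'w)$, or $C(uw)=c(u')$, or $C(u'w)=c(u)$. Partitioning $f(w)$ according to the value $C(uw)$ and using that $c\colon A\to\mathbb N^+$ is injective, one checks that all but at most two vertices of $f(w)$ share a common value $C(uw)=\gamma_w$; that is, the edges from $f(w)$ to $w$ form a monochromatic star together with $O(1)$ stray edges. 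The remaining edges from $A$ to $w$, namely those $uw$ with $C(uw)=c(u)$, must then be controlled separately.

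To finish, I would combine the two sides. The lower bound $\sum_{u\in A}|N(u)\cap B|\ge(\alpha n+1)\alpha n$ must be matched by $\sum_{w\in B}\deg_A(w)$, and the rigidity lemma restricts each $\deg_A(w)$ to the size of its monochromatic star, plus $O(1)$, plus the number of ``matched-colour'' edges $uw$ with $C(uw)=c(u)$. The monochromatic-star contributions are bounded by comparing stars at different $w,w'\in B$: a vertex common to two such stars, or a vertex of one star together with a suitable neighbour of another, completes a $C_4$ avoiding $v$ whose edge-colours one shows cannot all collide, forcing the stars to be nearly disjoint; the matched-colour edges are bounded by applying the color-degree hypothesis to the vertices of $B$. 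Feeding these bounds, together with $|B|\le(1-\alpha)n-2$ and $\alpha=(1-\alpha)^2$, into the counting inequality produces a contradiction once $n\ge 9$.

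The step I expect to be the real obstacle is precisely this final assembly: the rigidity lemma itself is short, but turning it into a global count requires simultaneously bounding the sizes of all the monochromatic stars $\{u\in A:C(uw)=\gamma_w\}$ and the matched-colour edges, and this forces one to look beyond $4$-cycles through $v$ and to use the color-degree lower bound on the vertices of $B$ as well as on those of $A$. Making the constants land exactly at $(3-\sqrt5)/2$, rather than at some weaker value, is the part that demands care.
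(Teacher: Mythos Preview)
This theorem is merely \emph{cited} in the present paper; no proof of it is given here. So there is no ``paper's own proof'' to compare against, and your proposal must stand or fall on its own.

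On its merits, the proposal has a genuine gap at exactly the point you flag, and it is more serious than just ``making the constants land''. Your rigidity lemma is essentially correct: for each $w\in B$, among the edges $uw$ with $u\in f(w)$ all but at most two share a common colour $\gamma_w$ (one checks that either there are at most three colour classes, all singletons, or one large class together with at most one singleton). The trouble is the next sentence, where you assert that the monochromatic stars at different $w,w'\in B$ are ``nearly disjoint''. This is not forced by the no-rainbow-$C_4$ hypothesis. If $u,u'\in S_w\cap S_{w'}$ lie in both stars, the $4$-cycle $u\,w\,u'\,w'\,u$ carries colours $\gamma_w,\gamma_w,\gamma_{w'},\gamma_{w'}$, which is bichromatic, not rainbow; so no contradiction arises, and two stars can overlap in arbitrarily many vertices. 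Consequently your upper bound on $\sum_{w\in B}\deg_A(w)$ via ``sum of nearly disjoint star sizes plus $O(|B|)$'' does not go through, and without it the double count $\sum_{u\in A}|N(u)\cap B|\ge(\alpha n+1)\alpha n$ meets no matching upper bound.

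What is missing is a second structural constraint, coming from the colour-degree requirement on the vertices $u\in A$ themselves: each $u$ needs at least $\alpha n$ \emph{distinct} colours on its edges into $B$, so if $u$ sits in many stars $S_w$ with a common colour $\gamma_w=\gamma$, those edges contribute only one colour to $d^c(u)$. Exploiting this, together with the rigidity at each $w$, is where the real work lies; as written, your outline does not carry it out, and the heuristic you offer in its place is incorrect.
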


\begin{theorem}[Li \cite{L}]\label{t7}
Let $G^c$ be a balanced bipartite colored graph of order $2n$ with
bipartition $(A,B)$. If $d^c(v)> 3n/5+1$ for each $v\in A\cup B$,
then $G^c$ has a rainbow $C_4$.
\end{theorem}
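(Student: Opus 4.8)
The plan is to argue by contradiction: I assume $G^c$ has no rainbow $C_4$ and derive a contradiction from a double count. Write $\delta$ for the common lower bound on the color degrees, so $d^c(v)>3n/5+1$ for every $v\in A\cup B$, and for each vertex $v$ fix a color neighborhood set $N^c(v)$ with $|N^c(v)|=d^c(v)$, so the edges from $v$ to $N^c(v)$ have pairwise distinct colors. Call a vertex $b\in B$ \emph{monochromatic for} a pair $\{a,a'\}\subseteq A$ if $a$ and $a'$ are both adjacent to $b$ and $C(ab)=C(a'b)$, and let $m(a,a')$ be the number of such $b$. I will estimate $\Sigma:=\sum_{\{a,a'\}\subseteq A}m(a,a')$ from below using the absence of rainbow $C_4$'s and from above using the color-degree hypothesis at the vertices of $B$; the two estimates will be incompatible once $n$ is large, and the finitely many small values of $n$ are routine.

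For the lower bound I claim that for every pair $\{a,a'\}\subseteq A$,
\[
m(a,a')\ \ge\ |N^c(a)\cap N^c(a')|-3\ \ge\ d^c(a)+d^c(a')-n-3\ \ge\ 2\delta-n-3 .
\]
To prove the first inequality, set $S=N^c(a)\cap N^c(a')$ and let $S'\subseteq S$ be the set of $b$ with $C(ab)\neq C(a'b)$; it suffices to show $|S'|\le 3$. On $S$ both maps $b\mapsto C(ab)$ and $b\mapsto C(a'b)$ are injective, so there is a well-defined partial injection $\sigma$ on $S'$ sending $b$ to the unique $b'\in S'$ (if one exists) with $C(a'b')=C(ab)$. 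For distinct $b,b'\in S'$, the $4$-cycle $a\,b\,a'\,b'$ automatically satisfies $C(ab)\neq C(ab')$ and $C(a'b)\neq C(a'b')$ (injectivity on $S$) as well as $C(ab)\neq C(a'b)$ and $C(ab')\neq C(a'b')$ (definition of $S'$); hence, if it is not rainbow, then $C(ab)=C(a'b')$ or $C(ab')=C(a'b)$, i.e. $\sigma(b)=b'$ or $\sigma(b')=b$. Thus the family $\{\,\{b,\sigma(b)\}:b\in\operatorname{dom}\sigma\,\}$ contains every $2$-element subset of $S'$; since $\sigma$ is a partial injection this family has at most $|S'|$ members, so $\binom{|S'|}{2}\le|S'|$ and $|S'|\le 3$. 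Summing the displayed inequality over all $\binom{n}{2}$ pairs gives $\Sigma\ge\binom{n}{2}(2\delta-n-3)$.

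For the upper bound, fix $b\in B$ and partition its incident edges into their $d^c(b)$ color classes, of sizes $s_1,\dots,s_{d^c(b)}\ge 1$ summing to the ordinary degree of $b$, which is at most $n$. The number of pairs $\{a,a'\}$ for which $b$ is monochromatic equals $\sum_i\binom{s_i}{2}$, and with the $s_i$ being $\ge 1$ and at least $d^c(b)\ge\delta$ in number, this is maximized by one large class and the rest singletons, hence is at most $\binom{n-\delta+1}{2}$. Summing over the $n$ vertices of $B$,
\[
\Sigma\ =\ \sum_{b\in B}\sum_i\binom{s_i}{2}\ \le\ n\binom{n-\delta+1}{2}.
\]
Combining the two bounds gives $\binom{n}{2}(2\delta-n-3)\le n\binom{n-\delta+1}{2}$; regarded as a polynomial inequality in $\delta$ this forces $\delta\le(2-\sqrt2)\,n+O(1)$, the constant $2-\sqrt2$ being the smaller root of $\alpha^{2}-4\alpha+2=0$ obtained by setting $\delta=\alpha n$ and matching leading coefficients. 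Since $2-\sqrt2<3/5$, this contradicts $\delta>3n/5+1$ for all sufficiently large $n$.

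I expect two places to require care. The first is the bound $|S'|\le 3$: one must confirm that $\sigma$ is genuinely well defined and that all four color inequalities hold before ``not rainbow'' is used. The second, and the real bookkeeping, is passing from the asymptotic threshold $2-\sqrt2$ to the exact bound $3n/5+1$ for every admissible $n$ — this means tracking the lower-order terms in $\binom{n}{2}(2\delta-n-3)\le n\binom{n-\delta+1}{2}$ and running a short direct argument for the small cases, where the gap between $2-\sqrt2$ and $3/5$ together with the ``$+1$'' in the hypothesis leaves room to spare. Finally, the whole count can be repackaged, more in the spirit of the present paper, as a reduction to finding a directed $C_4$ in an auxiliary oriented bipartite graph built from the sets $N^c(v)$ in which a directed $C_4$ produces a rainbow $C_4$, to which one applies a degree condition guaranteeing such a directed cycle.
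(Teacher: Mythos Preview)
The proof cannot be completed as written because the statement itself is false, and the paper says so explicitly. A proper $3$-edge-coloring of $K_{3,3}$ has $n=3$ and $d^c(v)=3>3\cdot 3/5+1=14/5$ for every vertex, yet with only three colors no $C_4$ can be rainbow. This kills your plan at the last step: the ``short direct argument for the small cases'' will never go through for $n=3$, so the gap is not bookkeeping but an actual counterexample. The paper's treatment of Theorem~\ref{t7} is precisely to exhibit this counterexample and replace the hypothesis $d^c(v)>3n/5+1$ by $d^c(v)\ge(3n+8)/5$, which it then proves (in the more general unbalanced form, Theorem~\ref{t10}) via the oriented-bipartite reduction you mention in your final sentence: one fixes an edge $xy$, builds an oriented bipartite graph on $N^c(x)\setminus\{y\}$ and $N^c(y)\setminus\{x\}$ in which a directed $C_4$ forces a rainbow $C_4$, and then invokes Theorem~\ref{t9}.

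Your double count is nonetheless worth recording as a genuinely different argument. The bound $|S'|\le 3$ is correct (your partial injection $\sigma$ is well-defined and injective because $b,b'\in N^c(a)\cap N^c(a')$ makes both color maps injective on $S$), and the convexity step for the upper bound is standard. The leading-order comparison indeed gives the threshold $2-\sqrt{2}\approx 0.586<3/5$, so for all sufficiently large $n$ your method proves something \emph{stronger} than the stated hypothesis. What it cannot do is recover the precise constant claimed for every $n$, since that claim is wrong; if you want a clean statement from this approach, the honest conclusion is an asymptotic one with constant $2-\sqrt{2}$ (plus lower-order terms), not the paper's $3/5$.
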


While in \cite{L}, H. Li made a tiny error in the proof of
Theorem \ref{t7}. Notice that $K_{3,3}$ is 3-edge-colorable, and a proper
3-edge-coloring of $K_{3,3}$ satisfies the condition of Theorem \ref{t7},
but it has no rainbow $C_4$ since there are only 3 colors.
We point out that, in order to correct it, the condition
$d^c(u)>3n/5+1$ should be changed into $d^c(u)>(3n+8)/5$.

Now we turn to finite simple oriented graphs, i.e., finite graphs
without multiple edges and loops in which each edge is replaced
by exactly one arc. Let $D[A,B]$ be an oriented bipartite graph with
bipartition $(A,B)$. When there is no ambiguity, we use $D$ instead
of $D[A,B]$. For $A_1\subseteq A$ and $B_1\subseteq B$, we denote by
$A_{D}(A_1,B_1)$ the set of arcs from $A_1$ to $B_1$ in $D[A,B]$.

The study of rainbow cycles in colored graphs is largely related to
the study of oriented cycles in digraphs. For a wonderful example,
see the introduction of \cite{L}. In particular, motivated by the study
of short rainbow cycles in colored graphs, H. Li \cite{L} proposed the
following nice conjecture and proved for balanced oriented bipartite graphs.

\begin{conjecture}[Li \cite{L}]\label{c1}
Let $D$ be an oriented bipartite graph with bipartition $(A,B)$.
If $d^{+}(u)>|B|/3$ for each $u\in A$ and $d^{+}(v)>|A|/3$
 for each $v\in B$, then $D$ has a directed $C_4$.
\end{conjecture}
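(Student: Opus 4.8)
The plan is to argue by contradiction: suppose $D=D[A,B]$ is an oriented bipartite graph with $d^{+}(u)>|B|/3$ for every $u\in A$ and $d^{+}(v)>|A|/3$ for every $v\in B$, yet $D$ contains no directed $C_4$, and derive a contradiction. Write $m=|A|$, $n=|B|$, and for $w\in A\cup B$ let $N^{+}(w),N^{-}(w)$ be the out- and in-neighbourhoods of $w$ in $D$.

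First I would record two structural consequences of having no directed $C_4$. \emph{(i) A square digraph.} Build an auxiliary digraph $T$ on vertex set $A$ by placing an arc $a\to a'$ whenever $D$ has a directed path $a\to b\to a'$ for some $b\in B$. Since $D$ is bipartite and oriented, every closed directed walk of length $4$ in $D$ is automatically a directed $C_4$ (one checks the four vertices must be distinct, a coincidence $v_0=v_2$ or $v_1=v_3$ forcing a digon in $D$). Hence $T$ has no digon, i.e.\ $T$ is a loopless oriented graph on $m$ vertices, so $d^{+}_{T}(a)+d^{-}_{T}(a)\le m-1$ for every $a\in A$; the analogous oriented graph on $B$ is built the same way. \emph{(ii) A rotation lemma.} If $a\to b\to a'$ is a directed path in $D$, then $N^{+}(a')\cap N^{-}(a)=\emptyset$, since any common vertex $b'$ would complete the directed $C_4$ $a\to b\to a'\to b'\to a$; equivalently, the out-neighbourhood of every second out-neighbour of $a$ lies inside $B\setminus N^{-}(a)$.

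Next I would fix a vertex $a_0\in A$ and examine the reachability layers $L_1=N^{+}(a_0)\subseteq B$, $L_2=\bigcup_{b\in L_1}N^{+}(b)\subseteq A$ (which is precisely the out-neighbourhood of $a_0$ in $T$), and $L_3=\bigcup_{a\in L_2}N^{+}(a)\subseteq B$. By (ii), $L_3$ — and trivially $L_1$ — avoids $N^{-}(a_0)$; double counting the arcs leaving $L_2$ (more than $|L_2|\cdot n/3$ of them, all landing in $B\setminus N^{-}(a_0)$) already forces $d^{-}(a_0)<2n/3$, with symmetric estimates on the $B$-side. The heart of the matter is to bound $|L_2|=d^{+}_{T}(a_0)$ from below. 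As $L_2$ is a union of $|L_1|>n/3$ sets each of size $>m/3$, the dangerous case is that these out-neighbourhoods almost coincide with a common set $X$ of size close to $m/3$; but then each vertex of $X$ receives arcs from almost all of $L_1$, so its in-degree is pushed above $n/3$, and feeding the vertices of $X$ back through (ii) propagates this ``collapse'' until $D$ is pinned down to a blow-up of a directed $C_6$, in which every degree hypothesis holds with equality rather than strictly. To convert this dichotomy into a usable inequality I would first isolate it as a general sufficient condition for a directed $C_4$ — a suitably weighted count of directed $3$-paths in $D$, bounded above using (i)–(ii) and the absence of a directed $C_4$, and bounded below using the degree hypotheses — and then obtain Conjecture~\ref{c1} as the special case in which all out-degrees only just exceed their prescribed thresholds.

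I expect the principal obstacle to be extracting the exact constant $1/3$. Crude counting (summing $d^{+}_{T}(a)+d^{-}_{T}(a)\le m-1$ over $A$, or estimating $\sum_{b\in B}d^{-}(b)d^{+}(b)$ in two ways) recovers the threshold only up to a multiplicative loss, because a union of many sets of size slightly above $m/3$ can again have size barely above $m/3$, and in the extremal blow-up of a directed $C_6$ all of the relevant inequalities are simultaneously tight. Overcoming this demands a \emph{global} use of the rotation lemma, quantifying precisely how a near-minimal second out-neighbourhood must force extra in-degree elsewhere; that is where essentially all the difficulty lies, and once the general arc-counting criterion is established, deducing Conjecture~\ref{c1} should be routine.
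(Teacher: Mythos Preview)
Your setup mirrors the paper's in spirit --- the reachability layers $L_1,L_2,L_3$ are essentially the paper's $B_2,A_3,B_3$ --- but the proposal stops precisely where the proof begins. You yourself flag the obstacle (``crude counting\ldots\ recovers the threshold only up to a multiplicative loss'') and then defer its resolution to an unspecified ``global use of the rotation lemma'' and a ``suitably weighted count of directed $3$-paths''. That \emph{is} the whole difficulty, and the proposal contains no mechanism for overcoming it; the auxiliary digraph $T$ and the bound $d^{+}_T+d^{-}_T\le m-1$ are never actually used, and the vague propagation argument (``feeding the vertices of $X$ back through (ii)'') is not a proof. As written this is an outline with an acknowledged gap at the only hard step.

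The idea you are missing is this. Do not start from an arbitrary $a_0$, but from a vertex of \emph{maximum} in-degree $k_1$ in $A$; let $k_2$ be the maximum in-degree in $B$. After reducing to $3\mid m,n$ and passing to a spanning subdigraph with out-degrees exactly $n_1:=n/3$ on $A$ and $m_1:=m/3$ on $B$, count arcs from $L_2$ into $L_3\setminus L_1$: each vertex there absorbs at most $k_2$, while in total there are at least $m_1n_1$ (every one of the $m_1n_1$ arcs from $L_1$ to $L_2$ blocks an arc from $L_2$ back to $L_1$, and no arc leaves $L_2$ for $N^{-}(a_0)$). Hence $|L_3\setminus L_1|\ge m_1n_1/k_2$, and since $N^{-}(a_0),\,L_1,\,L_3\setminus L_1$ are pairwise disjoint in $B$,
\[
3n_1\;\ge\;k_1+n_1+\frac{m_1n_1}{k_2}\;\ge\;3\sqrt[3]{\frac{k_1m_1n_1^{2}}{k_2}},
\]
i.e.\ $k_2n_1\ge k_1m_1$. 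Running the same argument from the $B$ side gives $k_1m_1\ge k_2n_1$, forcing equality everywhere; equality in AM--GM and in all the arc counts then pins the digraph down to the $C_6$-blow-up $D^*(m,n)$, which violates the \emph{strict} degree hypothesis of Conjecture~\ref{c1}. The combination ``pick the vertex of maximum in-degree'' $+$ ``two-sided AM--GM squeeze'' is exactly the global leverage you were looking for, and it is absent from your plan.
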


\begin{theorem}[Li \cite{L}]\label{t8}
Let $D$ be a balanced oriented bipartite graph with bipartition $(A,B)$,
where $|A|=|B|=n$. If $d^{+}(v)>n/3$ for each $v\in A\cup B$,
then $D$ has a directed $C_4$.
\end{theorem}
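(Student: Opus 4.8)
The plan is to argue by contradiction: suppose the balanced oriented bipartite graph $D[A,B]$ with $|A|=|B|=n$ and $d^{+}(v)>n/3$ for all $v\in A\cup B$ has no directed $C_4$, and derive a contradiction. \emph{Reformulation via length-two reachability.} For $a\in A$ put $N^{++}(a)=\{a'\in A:\ a\to b\to a'\text{ for some }b\in B\}$. Since $D$ is oriented it has no digon, so $a\notin N^{++}(a)$; and if, for distinct $a,a'\in A$, one has both $a'\in N^{++}(a)$ (via $a\to b\to a'$) and $a\in N^{++}(a')$ (via $a'\to b'\to a$), then necessarily $b\ne b'$ (else $b\to a'$ and $a'\to b$ would form a digon), so $a,b,a',b'$ span a directed $C_4$. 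Hence, under our assumption, for every pair $\{a,a'\}\subseteq A$ at most one of ``$a'\in N^{++}(a)$'' and ``$a\in N^{++}(a')$'' holds, which gives $\sum_{a\in A}|N^{++}(a)|\le\binom{n}{2}$, and symmetrically $\sum_{b\in B}|N^{++}(b)|\le\binom{n}{2}$.

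\emph{An extremal vertex and the structure it forces.} By the first inequality some $a_{0}\in A$ has $|N^{++}(a_{0})|\le(n-1)/2<n/2$. Write $S=N^{+}(a_{0})$, $T=N^{-}(a_{0})$, $Z=B\setminus(S\cup T)$, and $W=N^{++}(a_{0})=\bigcup_{b\in S}N^{+}(b)\subseteq A\setminus\{a_{0}\}$. Then $S\cap T=\emptyset$, $|S|=d^{+}(a_{0})>n/3$, $|W|<n/2$, every $b\in S$ has $N^{+}(b)\subseteq W$, and — because a vertex $a'\in W$ together with an out-neighbour of $a'$ lying in $T$ would close a directed $C_4$ through $a_{0}$ — every $a'\in W$ satisfies $N^{+}(a')\cap T=\emptyset$, i.e.\ $N^{+}(a')\subseteq S\cup Z$. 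Since $|N^{+}(b)|>n/3$ while $|W|<n/2$, each $b\in S$ dominates more than two-thirds of $W$; hence the sets $\{N^{+}(b)\}_{b\in S}$ pairwise overlap in more than $n/6$ vertices, $W$ is covered very efficiently, and $\sum_{b\in S}|N^{+}(b)|>|S|\,n/3$ forces some $a^{*}\in W$ to be an out-neighbour of more than $\tfrac{2}{3}|S|$ vertices of $S$. Thus the bipartite sub-digraph between $S$ and $W$ is extremely dense, while all arcs leaving $W$ land in $S\cup Z$.

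\emph{Deriving the contradiction.} The main obstacle is to squeeze a contradiction out of this picture at exactly the threshold $n/3$. My plan is to count the directed walks $b_{1}\to a'\to b_{2}$ with $b_{1},b_{2}\in S$ and $a'\in W$ (the membership $a'\in W$ being automatic once $b_{1}\to a'$): applying the reformulation above to the vertices of $S$ shows that for each unordered pair $\{b_{1},b_{2}\}\subseteq S$ at most one direction occurs, so the number of such walks is at most $\binom{|S|}{2}$. On the other hand each walk uses an arc $b_{1}\to a'$ into $W$ (there are more than $|S|\,n/3$ of these) followed by an arc $a'\to b_{2}$ with $b_{2}\in N^{+}(a')\cap S$, and since $N^{+}(a')\subseteq S\cup Z$ we have $|N^{+}(a')\cap S|>n/3-|Z|$ for every $a'\in W$; multiplying, the number of walks exceeds $(n/3-|Z|)\,|S|\,n/3$. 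Comparing with $\binom{|S|}{2}<|S|^{2}/2$ forces $|S|>\tfrac{2n}{3}(n/3-|Z|)$, which is impossible once $|Z|$ is small (for then the right-hand side is of order $n^{2}$ while $|S|\le n$). This disposes of the case where $a_{0}$ has few non-neighbours in $B$; the genuinely delicate remaining case is when $|Z|=n-d^{+}(a_{0})-d^{-}(a_{0})$ is large, i.e.\ $d^{-}(a_{0})$ is small, and there I would either choose the extremal vertex more carefully (trading off $|N^{++}(a_{0})|$ against $d^{+}(a_{0})+d^{-}(a_{0})$ by optimizing a weighted average over $A$), or run the symmetric argument from a minimizer of $|N^{++}(\cdot)|$ on the $B$-side via the companion inequality, and in either sub-case produce a returning length-two walk, hence a directed $C_4$. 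Calibrating all the constants so that the bound lands at $n/3$ rather than something weaker is the crux of the proof; small $n$ (where the hypothesis is satisfiable only in degenerate ways) is checked by hand. This establishes Theorem~\ref{t8}, i.e.\ the balanced case of Conjecture~\ref{c1}.
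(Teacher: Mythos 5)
Your opening reformulation is sound: in a $C_4$-free oriented bipartite graph, for each unordered pair $\{a,a'\}\subseteq A$ at most one of ``$a'\in N^{++}(a)$'' and ``$a\in N^{++}(a')$'' can hold, so $\sum_{a\in A}|N^{++}(a)|\le\binom{n}{2}$ and some $a_0$ has $|N^{++}(a_0)|<n/2$. But your central counting step is wrong. The pair argument bounds by $\binom{|S|}{2}$ only the number of \emph{ordered pairs} $(b_1,b_2)$ for which some walk $b_1\to a'\to b_2$ exists, not the number of walks: two distinct directed paths $b_1\to a'\to b_2$ and $b_1\to a''\to b_2$ form a $4$-cycle in the underlying graph but \emph{not} a directed $C_4$, so a single realized pair may carry many walks. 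Your lower bound $(n/3-|Z|)\,|S|\,n/3$, however, counts walks with multiplicity, so the comparison proves nothing. If one repairs it by counting realized pairs instead (some $b_1\in S$ has $|N^{++}(b_1)\cap S|\le(|S|-1)/2$, while $|N^{++}(b_1)\cap S|\ge |N^+(a')\cap S|>n/3-|Z|$ for any $a'\in N^+(b_1)$), the conclusion is only $|S|>2n/3-2|Z|+1$, which contradicts nothing. On top of this, the case you yourself call ``the crux'' --- large $|Z|$, i.e.\ small $d^-(a_0)$ --- is left entirely to unspecified re-optimization. So the proposal is an incomplete sketch with a broken key inequality, not a proof.

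For comparison, the paper does not attack the threshold through a second-neighborhood average. It proves the stronger Theorem~\ref{t9}: after padding so that $3\mid m,n$ and passing to a spanning subdigraph with out-degrees exactly $n/3$ (resp.\ $m/3$), it takes $u_0$ of maximum indegree $k_1$ in $A$ and $v_0$ of maximum indegree $k_2$ in $B$, sets $B_1=N^-(u_0)$, $B_2=N^+(u_0)$, $A_3=N^+(B_2)$, $B_3=N^+(A_3)\setminus B_2$, uses $C_4$-freeness to show $B_1,B_2,B_3$ are disjoint and that all arcs out of $A_3$ land in $B_2\cup B_3$, and then gets $3n_1\ge k_1+n_1+m_1n_1/k_2\ge 3\sqrt[3]{k_1m_1n_1^2/k_2}$ by AM--GM; running the symmetric computation from the $B$ side forces every inequality to be an equality and pins down the unique extremal digraph $D^*(m,n)$. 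Theorem~\ref{t8} follows at once, since $D^*(n,n)$ has every out-degree exactly $n/3$ and is excluded by the strict hypothesis. The maximum-indegree pivot and the exact-out-degree regularization are the two devices your approach is missing.
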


We state a construction from \cite{WLZL} to show that if Conjecture
\ref{c1} holds, then it would be almost the best possible. Let
$m$ and $n$ be two positive integers divisible by 3. Let
$|M_0|=|M_1|=|M_2|=m/3$ and $|N_0|=|N_1|=|N_2|=n/3$.
We construct an oriented bipartite graph with bipartition $(M, N)$,
where $M=M_0\cup M_1\cup M_2$ and $N=N_0\cup N_1\cup N_2$, by
creating all possible arcs from $M_i$ to $N_i$, and from $N_i$ to
$M_{i+1}$, $i=0,1,2$ (modulo 3). In the rest parts,
we use $D^*(m,n)$ to denote the construction above.

The first purpose of this paper is to confirm Conjecture \ref{c1}.
In fact, we prove a stronger result as follows.

\begin{theorem}\label{t9}
Let $D$ be an oriented bipartite graph with bipartition $(A,B)$,
where $|A|=m\geq 2$ and $|B|=n\geq 2$. If $d^{+}(u)\geq n/3$
for each $u\in A$ and $d^{+}(v)\geq m/3$  for each $v\in B$,
then either $D$ has a directed $C_4$ or $D=D^*(m,n)$.
\end{theorem}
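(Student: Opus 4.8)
The plan is to argue by contradiction: assume $D$ has no directed $C_4$, and show that the degree hypotheses force $D$ to be exactly $D^*(m,n)$. The key observation is that a directed $C_4$ in $D$ is precisely a pair of vertices $u,u'\in A$ and a pair $v,v'\in B$ with arcs $u\to v\to u'\to v'\to u$; equivalently, writing $N^+(u)\subseteq B$ for the out-neighbourhood and $N^-(u)\subseteq B$ for the in-neighbourhood of $u\in A$, a directed $C_4$ through two vertices of $A$ exists iff there are $u\neq u'$ in $A$ with $N^+(u)\cap N^-(u')\neq\emptyset$ and $N^-(u)\cap N^+(u')\neq\emptyset$. So "no directed $C_4$" says: for every ordered pair $(u,u')$ of distinct vertices in $A$, at least one of $N^+(u)\cap N^-(u')$ and $N^+(u')\cap N^-(u)$ is empty. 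First I would set up this reformulation carefully and record the symmetric statement for pairs in $B$.

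Next I would extract structure from the emptiness condition. For a fixed $u\in A$, partition the other vertices of $A$ according to how they interact with $u$ through $B$; the "no $C_4$" condition on each pair should let me define a (quasi-)order or a tournament-like relation on $A$, and similarly on $B$. The double-counting core of the argument: count arcs, or count "cherries'' (paths of length two) $v\to u\to v'$ with $u\in A$, $v,v'\in B$. Each vertex $u\in A$ with $d^+(u)\ge n/3$ and $d^-(u)\ge m/3$... — more precisely I would sum $d^+(u)$ over $u\in A$ to get $|A_D(A,B)|\ge mn/3$, and likewise $|A_D(B,A)|\ge mn/3$, so the total number of arcs is at least $2mn/3$, while the maximum possible is $mn$; this slack of $mn/3$ is exactly the budget that $D^*(m,n)$ saturates. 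I would then show, via the pairwise emptiness conditions, that the out-neighbourhoods $\{N^+(u):u\in A\}$ must be "laminar-free" in a way that, combined with the counting, forces a partition of $B$ into three equal blocks $N_0,N_1,N_2$ and of $A$ into three equal blocks $M_0,M_1,M_2$ with the cyclic adjacency pattern.

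Concretely, here is the intended chain of deductions once the extremal case is isolated. From "$N^+(u)\cap N^-(u')=\emptyset$ or $N^+(u')\cap N^-(u)=\emptyset$" for all pairs, plus the fact that each $B$-vertex has in-degree and out-degree (into/from $A$) summing to at most $m$ but averaging at least $2m/3$ each, I expect every $B$-vertex $v$ to satisfy $d^-(v)=d^+(v)=m/3$ and the in- and out-neighbourhoods of $v$ in $A$ to partition $A$ into two classes of size exactly... — no, into $N^-(v),N^+(v)$ and a third class of size $m/3$, and symmetrically for $A$-vertices. Then I would show two $B$-vertices with a common in-neighbour in $A$ have identical in-neighbourhoods (else one builds a $C_4$), so the relation "share an in-neighbour" is an equivalence on $B$ with classes of size $n/3$; the three classes are the $N_i$, and chasing the arcs identifies the $M_i$ and the cyclic pattern, giving $D=D^*(m,n)$.

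The main obstacle I anticipate is the rigidity step: turning the soft counting bound (total arcs $\ge 2mn/3$) into the exact block structure. The inequality alone does not immediately pin down the neighbourhoods; one needs the "no $C_4$" condition to act as a local constraint that propagates globally. I expect this to require a careful case analysis on small configurations (what can a pair $u,u'$ with $N^+(u)\cap N^-(u')\neq\emptyset$ but $N^+(u')\cap N^-(u)=\emptyset$ look like?) and an argument that any deviation from the $D^*(m,n)$ pattern either creates a directed $C_4$ or violates one of the degree bounds. Handling the non-balanced case $m\neq n$ simultaneously, and the small cases $m,n$ not divisible by $3$ (where $D^*$ is not defined, so the conclusion must be "$D$ has a directed $C_4$" outright), will also need attention — there the strict-vs-nonstrict inequality $d^+(u)\ge n/3$ should give enough extra room.
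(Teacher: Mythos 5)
Your reformulation of ``no directed $C_4$'' and the global arc count are fine as far as they go, but the heart of the theorem --- the rigidity step you yourself flag as ``the main obstacle'' --- is missing, and the route you propose for it does not work. The global bound (at least $2mn/3$ arcs out of a possible $mn$) leaves a slack of $mn/3$, which is nowhere near tight enough to pin down structure: in particular, your intermediate claim that every $v\in B$ satisfies $d^-(v)=d^+(v)=m/3$ cannot be extracted from it. The hypothesis gives $d^+(v)\geq m/3$ pointwise but only $d^-(v)\geq m/3$ \emph{on average} (some $v$ could a priori have $d^-(v)=0$), and a sum that averages at least $2m/3$ while being bounded by $m$ does not force equality anywhere. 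Likewise, the claim that two $B$-vertices with a common in-neighbour must have identical in-neighbourhoods ``else one builds a $C_4$'' is asserted, not proved, and I do not see how to prove it before the block structure is already in hand. So the proposal reduces the theorem to exactly the part it does not supply.

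The paper's argument replaces your global count with a \emph{local} one anchored at an extremal vertex, and this is the idea you are missing. After reducing to $m=3m_1$, $n=3n_1$ (via an augmentation by dummy vertices with all arcs outgoing, which cleanly handles the non-divisible case you defer), one passes to a spanning subdigraph $D_1$ with $d^+_{D_1}(u)=n_1$, $d^+_{D_1}(v)=m_1$ exactly, picks $u_0\in A$ of maximum in-degree $k_1$ and lets $k_2$ be the maximum in-degree in $B$. Setting $B_1=N^-(u_0)$, $B_2=N^+(u_0)$, $A_3=N^+(B_2)$, $B_3=N^+(A_3)\setminus B_2$, the no-$C_4$ hypothesis forces $N^+(A_3)\cap B_1=\emptyset$, so $B_1,B_2,B_3$ are disjoint; counting arcs out of $A_3$ gives $|A_{D_1}(A_3,B_3)|\geq |A_{D_1}(B_2,A_3)|=m_1n_1$, hence $|B_3|\geq m_1n_1/k_2$, and then
\begin{align*}
3n_1\;\geq\; k_1+n_1+\frac{m_1n_1}{k_2}\;\geq\;3\sqrt[3]{\frac{k_1m_1n_1^2}{k_2}},
\end{align*}
which yields $k_2n_1\geq k_1m_1$; the symmetric argument gives the reverse inequality, so every inequality in the chain is an equality. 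It is this forced tightness --- not the soft global count --- that delivers $|B_1|=|B_2|=|B_3|=n_1$, the exact in-degrees, the completeness of the bipartite blocks, and ultimately $D=D^*(m,n)$. If you want to salvage your outline, you need some analogue of this extremal-vertex argument; as written, the proposal has a genuine gap at its central step.
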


By using Theorem \ref{t9}, we can extend Theorem \ref{t7} as follows.

\begin{theorem}\label{t10}
Let $G^c$ be a bipartite colored graph with bipartition $(A,B)$.
If $d^c(u)\geq (3|B|+8)/5$ for each $u\in A$ and
$d^c(v)\geq (3|A|+8)/5$ for each $v\in B$,
then $G^c$ has a rainbow $C_4$.
\end{theorem}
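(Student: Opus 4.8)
The plan is to derive Theorem~\ref{t10} from Theorem~\ref{t9} by the standard reduction that turns a color-degree condition in $G^c$ into an out-degree condition in an auxiliary oriented bipartite graph, after first cleaning up the coloring. First I would pass to a subgraph $H\subseteq G$ in which each vertex $v$ is joined to a color-neighborhood set: for each $v\in A\cup B$ keep, for every color appearing at $v$, exactly one edge of that color incident to $v$. A rainbow $C_4$ in $H$ need not be rainbow in $G^c$ only if two of its edges repeat a color, so I would be slightly more careful and instead build, for each vertex, a set of $d^c(v)$ incident edges with pairwise distinct colors (a color neighborhood), then work inside the resulting bipartite graph $H$ with the coloring inherited from $G^c$; the point is that in $H$ every vertex still has color degree $d^c_H(v)=d^c_G(v)=d_H(v)$, i.e.\ all edges at a vertex of $H$ have distinct colors. (This last property is what makes the orientation trick work.)

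Next I would orient $H$: since every edge $uv$ of $H$ has a well-defined color $C(uv)$, and each endpoint sees that color only once, I would like to orient each edge toward the endpoint where its color is ``smaller'' in some consistent sense — but the cleaner route, used in this circle of ideas, is to orient $H$ so that the out-neighborhoods encode color classes. Concretely: well-order the colors, and orient $uv$ from $u$ to $v$ if the color $C(uv)$ is the $i$-th smallest color at $u$ but not the $i$-th at $v$ — more robustly, one shows there is an orientation $D$ of $H$ with $d^+_D(u)\ge d_H(u)/2 - O(1)$ for every vertex, since each vertex can ``claim'' at least half of its incident edges as out-arcs in a suitable greedy/alternating argument (this is where the $3/5$ versus $3/3$ discrepancy and the additive constant $8/5$ get absorbed). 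Then $d^+_D(u)\ge \tfrac12\bigl(d^c_G(u)\bigr)-c \ge \tfrac12\bigl((3|B|+8)/5\bigr)-c$; the numbers are arranged precisely so that this is $\ge |B|/3$ for $u\in A$ and $\ge |A|/3$ for $v\in B$, strictly, so Theorem~\ref{t9} applies and in fact rules out $D=D^*(m,n)$ because the strict inequality $d^+>|B|/3$ fails in $D^*$. Hence $D$, and therefore $H$, contains a directed $C_4$, say on vertices $a_1,b_1,a_2,b_2$.

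Finally I would argue that a directed $C_4$ in $D$ yields a rainbow $C_4$ in $G^c$. The four arcs are $a_1b_1, b_1a_2, a_2b_2, b_2a_1$; consecutive arcs share a vertex, and at every vertex of $H$ the incident edges carry distinct colors, so $C(a_1b_1)\ne C(b_1a_2)$, $C(b_1a_2)\ne C(a_2b_2)$, etc. The only way this $C_4$ fails to be rainbow is if two \emph{opposite} edges share a color, $C(a_1b_1)=C(a_2b_2)$ or $C(b_1a_2)=C(b_2a_1)$. This is exactly the subtlety one must kill, and I expect it to be \textbf{the main obstacle}: I would handle it by choosing the orientation of $H$ more carefully so that the out-arcs at $u$ point to a genuine color neighborhood $N^c(u)$ \emph{consistently across $A$}, for instance by orienting according to a fixed linear order $\prec$ on the color set so that $uv$ is oriented $u\to v$ when $C(uv)$ is not the minimum color at $v$; then along a directed $C_4$ the colors are forced to be strictly increasing around suitable stretches, precluding an opposite repeat. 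Alternatively, if an opposite pair does repeat, I would locally modify: swap in another edge from the color neighborhood at one of the four vertices (available because the degrees are well above the threshold) to reroute and obtain a genuinely rainbow $C_4$, a finite case analysis that the additive slack $8/5$ is designed to accommodate. Carrying out this last step rigorously — pinning down the orientation so that directed $C_4$'s are automatically rainbow, or doing the rerouting — is the only non-routine part; everything else is the bookkeeping of the inequalities $\tfrac12\cdot\tfrac{3N+8}{5}>\tfrac{N}{3}$.
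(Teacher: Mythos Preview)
Your reduction has a fatal arithmetic error and misses the central construction. You propose to orient the whole bipartite graph $H$ so that each vertex keeps roughly half its incident edges as out-arcs, and then check that
\[
\tfrac{1}{2}\cdot\frac{3N+8}{5}\;>\;\frac{N}{3}.
\]
But this inequality is equivalent to $9N+24>10N$, i.e.\ $N<24$; it fails for all large $N$. No orientation of $H$ that merely halves the degree can meet the hypothesis of Theorem~\ref{t9}, so this route cannot work regardless of how the ``opposite-edge repeat'' issue is handled.

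What the paper actually does is quite different. One fixes a single edge $xy$ and passes to the \emph{color neighborhoods} $A_1=N^c(y)\setminus\{x\}$ and $B_1=N^c(x)\setminus\{y\}$; the auxiliary oriented graph $D$ lives on $A_1\cup B_1$, not on all of $A\cup B$. The orientation rule is structural, not a balancing trick: an edge $x_iy_j$ (with $C(x_iy_j)\neq C(xy)$ and $C(xy_j)\neq C(yx_i)$) must, if $G^c$ has no rainbow $C_4$, satisfy $C(x_iy_j)\in\{C(xy_j),C(yx_i)\}$, and one orients toward $y_j$ in the first case and toward $x_i$ in the second. With this definition a directed $C_4$ in $D$ is \emph{automatically} rainbow in $G^c$; there is no opposite-edge obstacle to finesse. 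Theorem~\ref{t9} then yields a vertex of $D$, say $y_j$, with $d^+_D(y_j)<|A_1|/3$, and a direct count shows $y_j$ would need more than $|A|-|A_1|-1$ neighbors outside $A_1\cup\{x\}$, a contradiction. The constant $8/5$ is calibrated to this count, not to absorbing losses from a half-orientation. Your proposal never locates the pivot edge $xy$ or this color-matching orientation, which is the idea that makes the numbers close.
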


\section{Proofs}

{\bf {Proof of Theorem \ref{t9}}}

Let $\mathcal{D}(m,n)$ be the family of digraphs consisting of those oriented
bipartite graphs with bipartition $(A,B)$ which satisfies the condition
of Theorem \ref{t9}, where $m=|A|$ and $n=|B|$.

First we claim that it is sufficient to prove for those $m$ and $n$
which are both multiples of 3. Suppose Theorem \ref{t9} holds
for $\mathcal{D}(m,n)$ with $m\equiv n\equiv 0 ~(\mod ~3)$.
For any $D\in \mathcal{D}(m',n')$, where $m'$ and $n'$ are not
both multiples of 3, let $s_1=3\lceil\frac{m'}{3}\rceil-m'$,
$A^{*}=\{u_1,\ldots,u_{s_1}\}$ and $A'=A\cup A^{*}$. Let
$s_2=3\lceil\frac{n'}{3}\rceil-n'$, $B^{*}=\{v_1,\ldots,v_{s_2}\}$
and $B'=B\cup B^{*}$. Now we construct
a new oriented bipartite graph $D'$ with bipartition $(A',B')$,
where $A(D')=A(D)\cup \{u'v,v'u: u\in A,u'\in A^{*},v\in B, v'\in B^{*}\}$.
Notice that $d^{+}_{D'}(u)\geq \lceil\frac{|B|}{3}\rceil=\frac{|B'|}{3}$
for each $u\in A$ and $d^{+}_{D'}(u')=n'>\frac{|B'|}{3}$ for each $u'\in A^{*}$.
Similarly, $d^{+}_{D'}(v)\geq \lceil\frac{|A|}{3}\rceil=\frac{|A'|}{3}$
for each $v\in B$ and $d^{+}_{D'}(v')=m'>\frac{|A'|}{3}$ for each $v'\in B^{*}$.
It follows that $D'\in \mathcal{D}(3\lceil\frac{m'}{3}\rceil, 3\lceil\frac{n'}{3}\rceil)$.
Hence $D'$ has a directed $C_4$ or $D'=D^*(3\lceil\frac{m'}{3}\rceil, 3\lceil\frac{n'}{3}\rceil)$.
Since $A^{*}$ and $B^{*}$ are not both empty sets and the vertices in
$A^{*}$ and $B^{*}$ only have outdegrees,
$D'\neq D^*(3\lceil\frac{m'}{3}\rceil, 3\lceil\frac{n'}{3}\rceil)$, and moreover,
the directed $C_4$ in $D'$ is also in $D$. The proof of our claim is complete.

Now assume $D\in\mathcal{D}(m, n)$, where $m=3m_1$, $n=3n_1$ and
$m_1,n_1$ are two positive integers. Let $D_1$ be a spanning
subdigraph of $D$ satisfying $d_{D_1}^{+}(u)=n_1$ for each $u\in A$
and $d_{D_1}^{+}(v)=m_1$ for each $v\in B$. Suppose $D$ has no
directed $C_4$, obviously, $D_1$ also has no directed $C_4$.
Let $u_0$ be a vertex with maximum indegree $k_1$ among $A$, and
$v_0$ be a vertex with maximum indegree
$k_2$ among $B$. Let $B_1=N_{D_1}^{-}(u_0)$,
$B_2=N_{D_1}^{+}(u_0)$, $A_3=N_{D_1}^{+}(B_2)$ and
$B_3=N_{D_1}^{+}(A_3)-B_2$, where $|B_1|=k_1$,
$|B_2|=n_1$. Since $D_1$ has no directed $C_4$,
we have $N_{D_1}^{+}(A_3)\cap B_1=\emptyset$. Since $k_2$ is the maximum indegree
of all vertices in $B$, we get
\begin{align}
|B_3|k_2\geq |N_{D_1}^{-}(B_3)|=|A_{D_1}(A_3,B_3)|.
\end{align}
Since $D_1$ has no directed $C_4$, there is no arc from $A_3$ to $B_1$, which implies
all arcs starting from $A_3$ have heads in $B_2\cup B_3$. Hence
$|A_{D_1}(A_3,B_3)|=|A_3|n_1-|A_{D_1}(A_3,B_2)|$. Since
\begin{align}
|A_{D_1}(A_3,B_2)|\leq |A_3||B_2|-|A_{D_1}(B_2,A_3)|,
\end{align}
we obtain
\begin{align}
|A_{D_1}(A_3,B_3)|\geq |A_3|n_1-(|A_3||B_2|-|A_{D_1}(B_2,A_3)|)
=|A_{D_1}(B_2,A_3)|=m_1n_1.
\end{align}
Together with (1) and (3), we obtain $|B_3|\geq \frac{m_1n_1}{k_2}$. Therefore
\begin{align}
3n_1=|B|\geq |B_1|+|B_2|+|B_3|\geq k_1+n_1+\frac{m_1n_1}{k_2}\geq 3\sqrt[3]{\frac{k_1m_1n_1^2}{k_2}}.
\end{align}
It follows that $k_2n_1\geq k_1m_1$. By symmetry, we also have $k_1m_1\geq k_2n_1$.
Thus, $k_1m_1=k_2n_1$ and all the inequalities (1)-(4) are actually equalities.
These facts imply $|B_1|=|B_2|=|B_3|=n_1=k_1$, $m_1=k_2$,
$|A_{D_1}(B_2,A_3)|=|A_{D_1}(A_3,B_3)|=m_1n_1$,
and all vertices in $A$ have indegree $n_1$ in $D_1$,
all vertices in $B$ have indegree $m_1$ in $D_1$.

Next we show $|A_3|=m_1$. First, choose a vertex $v'\in B_2$, $d^+(v')=m_1$,
so $|A_3|\geq m_1$ by the definition of $A_3$. Assume that $|A_3|>m_1$.
Since the inequality (2) becomes equality, the underlying graph of
$D_1[A_3,B_2]$ is a complete bipartite graph. Hence there exists a
vertex, say $u'\in A_3$, such that $u'v'\in D_1$. Since $u'\in A_3$,
there exist a vertex $v''\in B_2$, such that $v''u'\in D_1$ by the
choice of $A_3$. Note that $d^+_{D[B_2,A_3]}(v')=d^+_{D[B_2,A_3]}(v'')=m_1$
and $u'\notin N^+(v')$. It follows that there exists a vertex,
say $u''\in A_3$, such that $v'u''\in D_1$ and $v''u''\notin D_1$.
Since the underlying graph of $D_1[B_2,A_3]$ is a complete bipartite graph,
$u''v''\in D_1$. Now $C=u'v'u''v''u'$ is a directed $C_4$ in $D_1$,
a contradiction. Hence $|A_3|=m_1$.

Now let $A_1=N_{D_1}^{+}(B_3)$ and $A_2=N_{D_1}^{-}(B_2)$.
Note that $|\{vu: v\in B_2, u\in A_3\}|=m_1n_1=|A_{D_1}(B_2,A_3)|$. It follows
that $A_{D_1}(B_2, A_3)=\{vu: v\in B_2, u\in A_3\}$. Similarly,
$A_{D_1}(A_3, B_3)=\{uv: u\in A_3, v\in B_3\}$. So there
is no arc with tail in $A_3$ and head in $B_2$, or arc with tail in
$B_3$ and head in $A_3$, follows $A_1\cap A_3=A_2\cap A_3=\emptyset$.
Since $D_1$ has no directed $C_4$, $A_1\cap A_2=\emptyset$.

Since $d_{D_1}^+(u)=d_{D_1}^-(u)=n_1$ for each $u\in A$ and
$d_{D_1}^+(v)=d_{D_1}^-(v)=m_1$ for each $v\in B$, we obtain
$|A_1|\geq m_1$ by its definition, and
$n_1|A_2|\geq |A_{D_1}(A_2, B_2)|=\sum_{v\in B_2} d_{D_1}^- (v)=m_1n_1$,
follows that $|A_2|\geq m_1$. Since $|A_1|+|A_2|+|A_3|=3m_1$
and $A_1, A_2, A_3$ are pairwise disjoint, $|A_1|=|A_2|=|A_3|=m_1$.
Now apparently, $A_{D_1}(A_2,B_2)=\{uv:u\in A_2,v\in B_2\}$ and
$A_{D_1}(B_3,A_1)=\{vu:v\in B_3,u\in A_1\}$. Since $A_1$ and $A_2$ are
disjoint, $A_{D_1}(A_1,B_2)=\emptyset$. Furthermore, $A_{D_1}(A_1,B_3)=\emptyset$,
hence $N^+(A_1)\subset B_1$. $\sum_{v\in A_1}d^+(v)=m_1n_1$
implies $A_{D_1}(A_1,B_1)=\{uv:u\in A_1,v\in B_1\}$. Similarly, $A_{D_1}(B_1,A_2)=
\{vu: v\in B_1,u\in A_2\}$. Therefore $D_1=D^{*}(m,n)$.
If there is any arc in $D$ but not in $D_1$, then obviously, there
would be a directed $C_4$ in $D$, a contradiction. Thus $D=D_1=D^{*}(m,n)$.
The proof is complete. {\hfill$\Box$}

{\bf {Proof of Theorem \ref{t10}}}

First note that the color degree condition implies that $|A|\geq 4$ and $|B|\geq 4$.

Suppose not. Let $G^c$ be a colored graph which satisfies the condition of
Theorem \ref{t10} but has no rainbow $C_4$. Set $|A|=n_1$ and $|B|=n_2$.

Choose an edge $e=xy\in E(G^c)$ such that $C(xy)=c_0$. Let
$N^c(x)=\{y,y_1,y_2,\ldots,y_{r-1}\}$ and
$N^c(y)=\{x,x_1,x_2,\ldots,x_{s-1}\}$.
Since $d^c(x)\geq \frac{3n_2+8}{5}$ and $d^c(y)\geq \frac{3n_1+8}{5}$, we can
set $r=\lceil\frac{3n_2+8}{5}\rceil$ and $s=\lceil\frac{3n_1+8}{5}\rceil$.
Let $A_1=\{x_1,x_2,\ldots,x_{s-1}\}$ and $B_1=\{y_1,y_2,\ldots,y_{r-1}\}$.
Note that $G^c[A_1,B_1]$ is also a bipartite colored graph.

The following claim can be deduced immediately from the definition of
color neighborhood set and the assumption that $G^c$ has no rainbow $C_4$.

\begin{claim}\label{cl1}
For any edge $x_iy_j$ and $C(xy_j)\neq C(yx_i)$, where $1\leq i\leq s-1$
and $1\leq j\leq r-1$, we have $C(x_iy_j)\in \{C(xy),C(xy_j),C(yx_i)\}$.
\end{claim}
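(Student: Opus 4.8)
The plan is to extract from the edge $x_iy_j$ an explicit $4$-cycle in $G^c$ and then invoke the standing assumption that $G^c$ has no rainbow $C_4$. The natural choice is the closed walk $x\,y_j\,x_i\,y\,x$: the edge $xy_j$ is present because $y_j\in N^c(x)\subseteq N(x)$, the edge $x_iy$ is present because $x_i\in N^c(y)\subseteq N(y)$, the edge $yx$ is present by the choice of $e$, and $x_iy_j$ is the edge we started from. So all four consecutive pairs are edges of $G^c$.

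Before using this I would check that the walk is a genuine $C_4$ on four distinct vertices. Since $G^c$ is bipartite with parts $A$ and $B$ and $xy$ is an edge, we may take $x\in A$ and $y\in B$; then $N^c(x)\subseteq N(x)\subseteq B$ and $N^c(y)\subseteq N(y)\subseteq A$, so $y_j\in B$ and $x_i\in A$. As $N^c(x)=\{y,y_1,\dots,y_{r-1}\}$ and $N^c(y)=\{x,x_1,\dots,x_{s-1}\}$ are subsets of $N(x)$ and $N(y)$, their listed elements are pairwise distinct, whence $y\neq y_j$ and $x\neq x_i$. Therefore $x,y_j,x_i,y$ are four distinct vertices and $C:=xy_jx_iy x$ is a non-degenerate $C_4$ in $G^c$.

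Because $G^c$ has no rainbow $C_4$, two of the four edges of $C$ must share a color. Three of these edges, namely $xy_j$, $x_iy$ and $yx$, carry the colors $C(xy_j)$, $C(yx_i)$ and $C(xy)$, and I claim these three colors are pairwise distinct: $C(xy_j)\neq C(xy)$ since $y_j$ and $y$ both lie in $N^c(x)$ and the colors on edges from $x$ to $N^c(x)$ are distinct by definition; $C(yx_i)\neq C(xy)$ by the same reasoning applied at $y$; and $C(xy_j)\neq C(yx_i)$ is exactly the hypothesis of the claim. Hence the repeated color is forced to involve the fourth edge $x_iy_j$, i.e. $C(x_iy_j)\in\{C(xy),C(xy_j),C(yx_i)\}$, which is the assertion.

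There is no serious obstacle here; the only points requiring care are (a) confirming that the exhibited $C_4$ does not degenerate, which is precisely where the fact that $N^c(\cdot)$ lists distinct vertices is used, and (b) observing that the three auxiliary edges carry pairwise distinct colors so that the color coincidence cannot be absorbed among them. Both are immediate once the definitions of color neighborhood set and rainbow subgraph are unwound, which is why the claim follows ``immediately'' from them.
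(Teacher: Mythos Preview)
Your argument is correct and is exactly the unpacking the paper has in mind: the paper does not give an explicit proof, stating only that the claim follows immediately from the definition of color neighborhood set and the assumption that $G^c$ has no rainbow $C_4$, and your $4$-cycle $xy_jx_iyx$ together with the pairwise distinctness of $C(xy)$, $C(xy_j)$, $C(yx_i)$ is precisely what makes that immediate.
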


Now we construct an oriented bipartite graph $D=D[A_1,B_1]$ as follows. For any edge
$x_iy_j\in E(G[A_1,B_1])$, such that $C(x_iy_j)\neq C(xy)$ and $C(xy_j)\neq C(yx_i)$,
then $C(x_iy_j)=C(xy_j)$ or $C(x_iy_j)=C(yx_i)$ by Claim \ref{cl1}.
If $C(x_iy_j)=C(xy_j)$, we define an arc $x_i y_j$ in $D$, and if $C(x_iy_j)= C(yx_i)$,
we define an arc $y_j x_i$ in $D$. Let $G'=G[D]$ be the underlying graph of
$D$.

In the following, for convenience, when we mention the color of an
arc in $D$, we mean the color of the corresponding edge in $E(G')$.

\begin{claim}\label{cl2}
There is no directed $C_4$ in $D$.
\end{claim}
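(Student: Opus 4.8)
The plan is to argue by contradiction: suppose $D$ contains a directed $C_4$, say $C = a_1b_1a_2b_2a_1$ with $a_1, a_2 \in A_1$ and $b_1, b_2 \in B_1$ (the arcs of $D$ always alternate between the two sides since $D$ is bipartite). I would first translate the four arcs of $C$ back into color equalities in $G^c$ using the construction of $D$. Each arc $x_iy_j$ of $D$ satisfies $C(x_iy_j) = C(xy_j)$, and each arc $y_jx_i$ satisfies $C(x_iy_j) = C(yx_i)$; moreover, by the defining requirement for an edge of $G'$, every such edge $x_iy_j$ has $C(x_iy_j) \neq C(xy)$ and $C(xy_j) \neq C(yx_i)$. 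So from the cycle $C$ I get four edges of $G'$ whose colors are each dictated by an edge incident to $x$ or to $y$.

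Writing $C = a_1 b_1 a_2 b_2 a_1$, the two arcs incident to $a_1$ point in opposite directions along the cycle: one is an arc into $a_1$ and one is an arc out of $a_1$. An arc of $D$ with head $a_1$, say $b a_1$, forces $C(a_1 b) = C(y a_1)$; an arc with tail $a_1$, say $a_1 b'$, forces $C(a_1 b') = C(x b')$. Doing this around the whole $4$-cycle, and using that the in/out pattern alternates, I expect to extract from $C$ a rainbow $C_4$ in $G^c$ itself — most naturally the $4$-cycle $x \, b \, y \, b' \, x$ (if the two $B$-vertices of $C$ are joined to $x$ and $y$ in the right way) or a $4$-cycle of the form $x\,y_j\,x_i\,\text{(something)}$, where the four colors are forced to be pairwise distinct by combining the equalities just derived with Claim~\ref{cl1}, the distinctness of colors within $N^c(x)$ and within $N^c(y)$, and the side conditions $C(\cdot) \neq C(xy) = c_0$ and $C(xy_j) \neq C(yx_i)$. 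This contradicts the assumption that $G^c$ has no rainbow $C_4$.

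The main obstacle is bookkeeping: a directed $C_4$ in $D$ has two distinct orientation patterns up to the cyclic/reflective symmetry of $C$ (the arcs can go $\to\to\to\to$ around, or alternate $\to\leftarrow\to\leftarrow$), and for each pattern one must check that the colors $c_0$, the two colors coming from $x$-edges, and the two coming from $y$-edges are genuinely four distinct colors forming a $C_4$. I would handle the patterns as short cases, in each case naming the two $A_1$-vertices $x_i, x_k$ and the two $B_1$-vertices $y_j, y_\ell$ of $C$, reading off the color of each of the four edges $x_iy_j, y_jx_k, x_ky_\ell, y_\ell x_i$ from the construction, and then exhibiting the explicit rainbow $4$-cycle among the vertices $\{x, y, x_i, x_k, y_j, y_\ell\}$ whose existence those equalities guarantee. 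Care is needed that the exhibited $4$-cycle is actually a cycle (its vertices are distinct and the relevant edges exist in $G$ — the edges $xy_j$, $yx_i$, etc., exist by definition of $N^c(x)$, $N^c(y)$, and $xy$ exists by choice), and that a repeated color would backpropagate to either a repeated color inside $N^c(x)$ or $N^c(y)$, or to a violation of one of the side conditions built into $D$ — each of which is impossible.

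Once the contradiction is reached in every case, Claim~\ref{cl2} follows. I would keep the exposition tight by noting up front that by symmetry between $x$ and $y$ (and between the two pairs of cycle vertices) it suffices to treat one representative of each orientation pattern, so the case analysis stays short.
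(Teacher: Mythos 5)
There is a genuine gap, and it sits exactly at the decisive step: identifying \emph{which} $4$-cycle of $G^c$ is rainbow. Your candidates do not work. The cycle $x\,b\,y\,b'\,x$ is not even a cycle of the bipartite graph $G$: here $x\in A$ while $y,b,b'$ all lie in $B$, so the ``edges'' $by$ and $yb'$ join two vertices on the same side. Your fallback, a $4$-cycle through $x$ (or $y$) together with an arc of the directed cycle, is automatically \emph{not} rainbow: if $x_i\to y_j$ is an arc of $D$ then by construction $C(x_iy_j)=C(xy_j)$, so any $4$-cycle containing both the edge $x_iy_j$ and the edge $xy_j$ repeats a color — the very color equalities you correctly extract sabotage these candidates. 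The correct (and simpler) choice, which is what the paper does, is the directed $C_4$ \emph{itself}: writing it as $Q=x_iy_jx_py_qx_i$, the four equalities $C(x_iy_j)=C(xy_j)$, $C(y_jx_p)=C(yx_p)$, $C(x_py_q)=C(xy_q)$, $C(y_qx_i)=C(yx_i)$, combined with the side condition $C(xy_j)\neq C(yx_i)$ built into the definition of each edge of $G'$ and the pairwise distinctness of colors within $N^c(x)$ and within $N^c(y)$, force the four edges of $Q$ to receive four distinct colors in $G^c$, contradicting the no-rainbow-$C_4$ assumption. So your toolkit is the right one, but it must be aimed at $Q$, not at a cycle through $x$ or $y$.

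A secondary error: your case analysis over ``two orientation patterns'' is spurious, and in fact contradicts your own earlier (correct) observation that each vertex of a directed cycle has exactly one in-arc and one out-arc on the cycle. The alternating pattern $\to\leftarrow\to\leftarrow$ is not a directed $C_4$ at all (some vertex would have in-degree $2$ on the cycle); a directed $C_4$ in an oriented bipartite graph has a single orientation type up to symmetry, namely $x_i\to y_j\to x_p\to y_q\to x_i$. Dropping the phantom second case and redirecting the argument at $Q$ itself yields the paper's proof.
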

\begin{proof}
Suppose $Q=x_iy_jx_py_qx_i$ is a directed $C_4$ in $D$. By the
definition of $D$, we have $C(x_iy_j)=C(xy_j)$, $C(y_jx_p)=C(yx_p)$,
$C(x_py_q)=C(xy_q)$ and $C(y_qx_i)=C(yx_i)$. The existence of arcs
$x_iy_j$ and $y_jx_p$ implies $C(xy_j)\neq C(yx_i)$
and $C(xy_j)\neq C(yx_p)$, hence $C(x_iy_j)\neq C(y_qx_i)$ and
$C(x_iy_j)\neq C(y_jx_p)$. We have $C(xy_j)\neq C(xy_q)$ from the
definition of $B_1$, hence $C(x_iy_j)\neq C(x_py_q)$. So $C(x_iy_j)$
is different from the colors of all other three edges in the cycle $Q$
in $G^c$. Similarly we can prove that all edges in $Q$ receive distinct
colors in $G^c$, and therefore, $Q$ is a rainbow $C_4$ in $G^c$, a contradiction.
\end{proof}

\begin{claim}\label{cl3}
$D\neq D^*(|A_1|,|B_1|)$.
\end{claim}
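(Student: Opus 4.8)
The plan is to argue by contradiction: assume $D = D^*(|A_1|,|B_1|)$ and produce a rainbow $C_4$ in $G^c$, either inside $G'$ using the structure of $D^*$, or by bringing back the vertices $x$ and $y$ and the edge $xy$. Recall first that $|A_1| = s-1$ and $|B_1| = r-1$ with $r,s$ as defined, so the hypothesis $D^*(|A_1|,|B_1|)$ forces $s-1$ and $r-1$ to be divisible by $3$; this already pins down $n_1,n_2$ modulo small residues and will be used to check numeric inequalities at the end. Write the $D^*$-partition as $A_1 = M_0 \cup M_1 \cup M_2$ and $B_1 = N_0 \cup N_1 \cup N_2$ with all arcs going $M_i \to N_i$ and $N_i \to M_{i+1}$, each class of size $(s-1)/3$ or $(r-1)/3$ respectively.

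The first step is to understand what $D = D^*$ says about colors. If $x_i \in M_i$ and $y_j \in N_i$, then the arc $x_i y_j$ exists, so $C(x_iy_j) = C(xy_j)$; if instead $y_j \in N_i$ and $x_p \in M_{i+1}$, the arc $y_j x_p$ exists, so $C(x_py_j) = C(y x_p)$. Crucially, the \emph{non-arcs} of $D$ between $A_1$ and $B_1$ are also informative: if $x_iy_j \in E(G[A_1,B_1])$ but there is no arc in $D$, then by the construction of $D$ we must have $C(x_iy_j) = C(xy)$ or $C(xy_j) = C(yx_i)$. So every edge of $G[A_1,B_1]$ falls into one of a few color-controlled types. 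The idea is then to build a $C_4$ of the form $x_a y_b x_c y_d$ living mostly in $G'$: choose $x_a \in M_0$, $y_b \in N_0$ (arc $x_a \to y_b$, color $C(xy_b)$), $x_c \in M_1$ (arc $y_b \to x_c$, color $C(yx_c)$), $y_d \in N_1$ (arc $x_c \to y_d$, color $C(xy_d)$), and close up with $y_d x_a$: but $y_d \in N_1$ and $x_a \in M_0 = M_{(1)+?}$ — this is \emph{not} an arc of $D^*$, so we instead use the non-arc dichotomy on the edge $x_a y_d$ (if it is present in $G$) or else route through $x$, $y$.

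The cleanest route, and the one I would pursue as the main line, is to use the vertices $x$ and $y$ themselves. Since $\{C(xy_j) : y_j \in B_1\}$ are pairwise distinct and $\{C(yx_i): x_i \in A_1\}$ are pairwise distinct (by the definition of a color neighborhood set), and since in $D^*$ we have for $x_i \in M_0, y_j \in N_0$ the equality $C(x_iy_j)=C(xy_j)$, consider the $4$-cycle $x\, y_j\, x_i\, y\, x$ for suitable $x_i, y_j$: its edges are $xy_j$ (color $C(xy_j)$), $y_j x_i$ (color $C(x_iy_j)=C(xy_j)$ \emph{again} — bad). So that particular cycle is not rainbow, which tells me I must instead chain two different $D^*$-classes: take $x_i \in M_1$ and $y_j \in N_0$ so that the arc $y_j \to x_i$ has color $C(yx_i)$, then the cycle $x\, y_j\, x_i\, y\, x$ has edges $C(xy_j), C(yx_i), C(yx_i), C(xy)$ — still a repeat. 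The real fix is a $6$-cycle-to-$4$-cycle reduction or, more simply, to find two independent edges of $G'$ whose four colors, together with the constraints, cannot all collide; the pigeonhole here is that each vertex $y_j \in N_0$ has all its $D^*$-arcs to $M_0$ colored $C(xy_j)$ while receiving arcs from $M_2$ colored variously $C(yx_p)$, and the enormous number of such arcs ($(s-1)/3 \cdot (r-1)/3$, which is $\Omega(n_1 n_2)$) forces, via the distinctness of the $C(xy_\bullet)$ and $C(yx_\bullet)$ values, a pair giving four distinct colors.

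The main obstacle will be exactly this last counting/pigeonhole step: showing that the color constraints imposed by "$D = D^*$" are \emph{over-determined} given that $G^c$ is a genuine coloring with distinct colors along $N^c(x)$ and $N^c(y)$. Concretely, I expect to assume $D = D^*$, derive that a large bipartite piece $G[M_i, N_j]$ must be monochromatic or near-monochromatic in a forbidden way (since non-arcs force color $C(xy)$), bound the number of such edges by the size estimates $(r-1)/3, (s-1)/3$, and then invoke the numeric hypotheses $r = \lceil (3n_2+8)/5\rceil$, $s=\lceil(3n_1+8)/5\rceil$ (which make $(r-1)/3, (s-1)/3$ comparable to $n_2/5, n_1/5$) to reach a contradiction with $|B| = n_2$, $|A| = n_1$ — much as in the proof of Theorem \ref{t9}, where the AM–GM chain $3n_1 \ge k_1 + n_1 + m_1 n_1 / k_2$ did the analogous work. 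If the direct pigeonhole is too delicate, the fallback is to show instead that $D^*(|A_1|,|B_1|)$, when lifted back to $G^c$ via Claim \ref{cl1}, always contains a rainbow $C_4$ using one or two of $\{x, y\}$ plus two or three vertices of $A_1 \cup B_1$, chosen so that all four colors are forced distinct by the "distinct colors on $N^c(x)$" and "distinct colors on $N^c(y)$" properties.
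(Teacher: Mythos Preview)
Your proposal is an honest exploration rather than a proof, and the approaches you try do not close. The cycles through $x$ and $y$ that you test all repeat a color, as you yourself notice, and the vague ``pigeonhole on $(s-1)/3 \cdot (r-1)/3$ arcs'' does not lead anywhere: in $D^*$ every arc $x_i y_j$ with $x_i\in M_i$, $y_j\in N_i$ has color $C(xy_j)$ and every arc $y_j x_p$ with $y_j\in N_i$, $x_p\in M_{i+1}$ has color $C(yx_p)$, so no amount of counting arcs of $D^*$ alone will produce a rainbow $C_4$ --- any directed $C_4$ in $D^*$ would already contradict Claim~\ref{cl2}, and any undirected $C_4$ inside one block-pair repeats colors. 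Your remark that ``non-arcs force color $C(xy)$'' is also inaccurate: a non-arc $x_iy_j$ forces \emph{either} $C(x_iy_j)=C(xy)$ \emph{or} $C(xy_j)=C(yx_i)$, and in the second case Claim~\ref{cl1} says nothing about $C(x_iy_j)$.

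The missing idea is to bring in an edge of $G$ that is \emph{not} in $G'$. The paper first disposes of the degenerate case $D=D^*(3,3)$ directly. Then, fixing $v_0\in N_0$ say, it counts colors at $v_0$: all edges from $M_2$ into $v_0$ share the single color $C(xv_0)$; edges from $v_0$ into $M_1$ contribute at most $|M_1|$ colors; edges to $A\setminus(A_1\cup\{x\})$ contribute at most $n_1-s$ more. Since $d^c(v_0)\ge (3n_1+8)/5$, these are not enough, so some edge $v_0u_0$ with $u_0\in M_2$ must exist in $G$, lie outside $G'$, and carry a \emph{new} color. Separately one checks (using only the definition of $D$ and the distinctness of colors on $N^c(x)$, $N^c(y)$) that every directed path of length $3$ in $D^*$ is rainbow. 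Gluing such a path $u_0'\,v_0\, ?$ --- more precisely a path $u_0 v_0' u_0' v_0$ with $v_0'\in N_2$, $u_0'\in M_0$ --- to the fresh edge $v_0u_0$ then yields a rainbow $C_4$ in $G^c$; one must also argue that the fresh color really differs from all three path colors, which takes a short case analysis on why $u_0v_0$ was deleted when forming $D$. Your proposal never reaches for an edge outside $G'$, and that is the decisive step.
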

\begin{proof}
Assume that $D=D^*(|A_1|,|B_1|)$. Without loss of generality,
set $A_1=X_1\cup X_2\cup X_3$ and $B_1=Y_1\cup Y_2\cup Y_3$,
where $|X_1|=|X_2|=|X_3|$ and $|Y_1|=|Y_2|=|Y_3|$. Since
$s-1\equiv 0~(\mod ~3)$ and $r-1\equiv 0~(\mod ~3)$, we have
$\lceil\frac{3n_i+3}{5}\rceil\equiv 0~(\mod ~3)$, $i=1, 2$.
It follows that $n_i\equiv 3\ \text{or} \ 4\ (\mod \ 5)$, $i=1, 2$.

First, we claim that $D\neq D^*(3,3)$. Suppose not. Then
$\lceil\frac{3n_1+3}{5}\rceil=\lceil\frac{3n_2+3}{5}\rceil=3$.
Hence $n_1=n_2=4$. In this case, we may suppose that $X_i=\{x_i\}$
and $Y_i=\{y_i\}$ for $i=1,2,3$. The existence of arc $x_1y_1$ in
$D$ implies that $C(x_1y_1)=C(xy_1)$. Hence $d^c(y_1)\leq 3< \frac{3|A|+8}{5}$,
a contradiction.

Let $v_0$ be an arbitrary vertex of $D$. Without loss of generality, assume
$v_0\in Y_1$. If $n_1=5k+\lambda$, $\lambda=3\ \text{or}\ 4$, then $|X_1|=|X_2|=|X_3|=k+1$.
From the definition of $D$, we know that each edge in $\{uv_0:u\in X_1\}$ has
the same color $C(v_0x)$, and there are $k+1$ different colors in
$\{v_0u: u\in X_2\}$. Since $|A\backslash (A_1\cup \{x\})|=5k+\lambda-(3k+3)-1=2k+\lambda-4$,
there are at most $2k+\lambda-4$ different colors in the edge set
$\{v_0u:u\in A\backslash (A_1\cup \{x\})\}$. These facts mean that
there are at most $3k+\lambda-2$ different colors in the edge set
$\{v_0u:u\in A\backslash X_3\}$. Since $d^c(v)\geq 3k+\lambda$,
there exists an edge $v_0u_0\in \{v_0u:u\in X_3\}$, such that
$v_0u_0$ has a new color and $v_0u_0$ is not in $E(G')$.

Next we show that any directed path of length 3 in $D=D^*(|A_1|,|B_1|)$
is rainbow. Without loss of generality, we choose a directed path
$uv'u'v$, where $u'\in X_1$, $v\in Y_1$, $u\in X_3$ and $v'\in Y_3$.
By the construction of $D$, $C(u'v)=C(xv)$, $C(v'u')=C(yu')$ and
$C(uv')=C(xv')$. Since $u'v$ exists in $D$, $C(xv)\neq C(yu')$.
It follows that $C(u'v)\neq C(v'u')$. Similarly, we have
$C(v'u')\neq C(uv')$. Since $C(u'v)=C(xv)$ and $C(uv')=C(xv')$
and $C(xv)\neq C(xv')$ by the choice of $N^c(x)$, we have
$C(u'v)\neq C(uv')$.

Now we fix a vertex $v_0\in Y_1$. By the analysis above, there
exists an edge $v_0u_0\in \{v_0u\in E(G): u\in X_3\}$, which is
not in $E(G')$ and satisfies $C(v_0u_0)\notin \{C(uv_0\textcolor{red}{)}:u\in X_1\}$.
Since $D=D^*(|A_1|,|B_1|)\neq D^*(3,3)$, there are at least two
arcs in $A_D(Y_3,X_1)$ with distinct colors, and we can choose
one of them, say $v_0'u_0'\in A_D(Y_3,X_1)$, such that $C(v_0'u_0')\neq C(v_0u_0)$. By
the analysis before, there is an edge $u_0v_0''\in E(G^c)$, where
$v_0''\in Y_1$, such that it is not in $E(G')$ and satisfies $C(u_0v_0'')\neq C(u_0v_0')$.
Now we will show that $v_0''=v_0$. First, the deletion of
$u_0v_0$ means $C(u_0v_0)=C(xy)$ or $C(xv_0)=C(yu_0)$.
If $C(u_0v_0)=C(xy)$, then the existence of arcs $u_0'v_0$, $u_0'v_0'$ and $u_0v_0'$
in $D$ implies $C(xy)\neq C(u_0'v_0)$, $C(xy)\neq C(u_0'v_0')$
and $C(xy)\neq C(u_0v_0')$. Since colors of arcs in $\{u_0'v_0,v_0'u_0',u_0v_0'\}$
are pairwise distinct, $u_0'v_0'u_0v_0u_0'$ is a rainbow $C_4$ in $G^c$,
a contradiction. Hence $C(xv_0)=C(yu_0)$. Similarly, we may obtain $C(xv_0'')=C(yu_0)$
from the deletion of $u_0v_0''$ when constructing $D$. It follows that $C(xv_0)=C(xv_0'')$.
Now we can get $v_0=v_0''$ directly from the definition of $N^c(x)$.
From the analysis above, colors of arcs in $\{u_0'v_0,v_0'u_0',u_0v_0'\}$
are pairwise distinct, and $C(v_0u_0)$ is different from all of the three.
Therefore $u_0'v_0u_0v_0'u_0'$ is also a rainbow $C_4$ in $G^c$, a contradiction.
\end{proof}

By Claims \ref{cl2} and \ref{cl3}, $D$ has no directed $C_4$
and $D\neq D^*(|A_1|,|B_1|)$. By Theorem \ref{t9},
there exists a vertex, without loss of generality, say $y_j\in B_1$, such that
$d^{+}_{D}(y_j)<\frac{|A_1|}{3}$. By the construction of $D$,
we know there are less than $\frac{|A_1|}{3}+1$ different
colors in $C(E_{G'}(y_j, A_1))$. For any edge $e$ adjacent
to $y_j$ which is in $E(G[A_1, B_1])\backslash E(G')$, $C(e)=C(xy)$
or there exists an edge $yx_i$ such that $C(yx_i)=C(xy_j)$ in $G^c$, and in this case,
$e=x_iy_j$ and $C(e)$ maybe missed in $C(E_{G'}(y_j, A_1))$. This implies
that there are at most three colors in $C(xy_j)\cup C(E_{G}(y_j, A_1))\backslash C(E_{G'}(y_j, A_1))$.
However, $C(xy_j)$ is also in $C(E_{G'}(y_j,A_1))$. Hence there are less than $\frac{|A_1|}{3}+3$ different
colors in $C(E_{G}(y_j,\{x\}\cup A_1))$. It follows that there are more than
$d^c(y_j)-3-\frac{|A_1|}{3}\geq s-3-\frac{s-1}{3}$ different
colors in the color set $\{y_jx':x'\in A\backslash (A_1\cup\{x\})\}$. Hence $y_j$ has more
than $s-3-\frac{s-1}{3}$ different neighbors in $A\backslash (A_1\cup\{x\})$.
Now we have
$$n_1=|A|\geq |A_1|+|\{x\}|+|N(y_j)\backslash (A_1\cup\{x\})|>(s-1)+1+s-3-\frac{s-1}{3}=\frac{5s-8}{3}\geq n_1,$$
a contradiction.

The proof is complete.
{\hfill$\Box$}

\section*{Acknowledgement}
Bo Ning is supported by NSFC (No. 11271300) and the Doctorate Foundation
of Northwestern Polytechnical University (cx201326). Jun Ge is supported
by NSFC (No. 11171279 and No. 11271307). The authors are grateful to editors
and anonymous referees for helpful comments on an earlier version of this article.

\end{document}